\documentclass{amsart}

\usepackage{mathrsfs}
\usepackage{amsmath}
\usepackage[all,cmtip,2cell]{xy}
\usepackage{amssymb}
\usepackage{fullpage}
\usepackage{microtype}
\usepackage{hyperref}

\newtheorem{global-theorem}{Theorem}
\newtheorem{theorem}{Theorem}[section]
\newtheorem{lemma}[theorem]{Lemma}
\newtheorem{corollary}[theorem]{Corollary}
\newtheorem{proposition}[theorem]{Proposition}

\theoremstyle{definition}
\newtheorem{definition}[theorem]{Definition}
\newtheorem{example}[theorem]{Example}
\newtheorem{remark}[theorem]{Remark}

\begin{document}

\title{Crossed Simplicial Group Categorical Nerves}
\author{Scott Balchin}
\address{Department of Mathematics\\
University of Leicester\\
University Road, Leicester LE1 7RH, England, UK}
\email{slb85@le.ac.uk}

\maketitle

\begin{abstract}
We extend the notion of the nerve of a category for a small class of crossed simplicial groups, explicitly describing them using generators and relations.  We do this by first considering a generalised bar construction of a group before looking at twisted versions of some of these nerves.  As an application we show how we can use the twisted nerves to give equivariant versions of certain derived stacks.  
\end{abstract}

\section*{Introduction}

Simplicial constructions give us an expansive toolset to use in the theory of many mathematical topics (see \cite{goerss1999simplicial} for an overview of the theory).  There is a natural question -- what meaningful extensions of the simplex category are there?  Examples of such extensions include Connes' cyclic category $\Lambda$  \cite{connes1995noncommutative}, Segal's category $\Gamma$   \cite{lurie_topos} and the category of finite rooted trees $\Omega$    \cite{dendroidal}.  Of interest to us in this article are the categories which have properties similar to the cyclic category.  The cyclic category has the same combinatorics as the simplex category, with the addition of another generator $\tau_n$ which gives a cyclic action on $[n]$:

\begin{itemize}
\item $\tau_n \delta_i = \delta_{i-1} \tau_{i-1}$ for $1 \leq i \leq n$,
\item $\tau_n \delta_0 = \delta_n$,
\item $\tau_n \sigma_i = \sigma_{i-1}\tau_{n+1}$ for $1 \leq i \leq n$,
\item $\tau_n \sigma_0 = \sigma_n \tau^2_{n+1}$,
\item $\tau_n^{n+1} = 1_n$.
\end{itemize}

From these generators one can see that when we take a cyclic set, that is an element of the presheaf category of $\Lambda$, we have a natural action of $C_{n+1}$, the cyclic group of order $n+1$, on $X_n$.  Crossed simplicial groups, as introduced by Loday and Fiedorowicz    \cite{loday} (and independently by Krasauskas under the name of \textit{skew-simplicial sets}    \cite{krasauskas}),  allow us to consider what other groups we can replace the cyclic groups by and still get a category with combinatorial properties like the cyclic category.

Most constructions that can be done in the simplicial setting have an analogue in the cyclic, and therefore also the crossed simplicial setting.  In this paper, we give the explicit construction of a crossed simplicial version of the nerve and bar constructions of a group $G$ for some special examples of crossed simplicial groups which arise through a classification theorem.  We then extend this idea further and define crossed simplicial group nerves of categories.

One place that the classical nerve construction can be used is in the theory of derived algebraic geometry. It allows us to take a functor valued in groupoids (i.e., a 1-stack) and lift it to a functor valued in simplicial sets (i.e., an $\infty$-stack). If we were, for example, to replace the nerve with the cyclic categorical nerve, then we instead get a functor valued in cyclic sets (i.e., a ``cyclic-$\infty$-stack'').  In the last section of this paper, we will pursue this line of thinking, and investigate the equivariant derived stack of local systems.

\section{Crossed Simplicial Group Objects}
Crossed simplicial groups are a generalisation of the simplex category $\Delta$ to allow group actions. They were mainly introduced as tools for use in functor homology    \cite{loday1998cyclic}, but have recently seen other uses, such as in the theory of structured surfaces   \cite{surface}.  We will begin by giving the basic definitions and properties of crossed simplicial groups before looking at some examples.

\begin{definition}
A \textit{crossed simplicial group} is a category $\Delta \mathfrak{G}$ equipped with an embedding $i \colon  \Delta \hookrightarrow \Delta \mathfrak{G}$ such that:
\begin{enumerate}
\item The functor $i$ is bijective on objects.
\item Any morphism $u \colon  i[m] \to i[n]$ in $\Delta \mathfrak{G}$ can be uniquely written as $i(\phi) \circ g$ where $\phi \colon  [m] \to [n]$ is a morphism in $\Delta$ and $g$ is an automorphism of $i[m]$ in $\Delta \mathfrak{G}$.  We call this decomposition the \textit{canonical decomposition}.
\end{enumerate}
We will leave the usage of the functor $i$ implicit, and just refer to objects of $\Delta \mathfrak{G}$ as $[n]$ for $n \geq 0$.  To every crossed simplicial group $\Delta \mathfrak{G}$ we can assign a sequence of groups $\mathfrak{G}_n = \text{Aut}_{\Delta \mathfrak{G}}([n])$.
\end{definition}

\begin{example}
Any simplicial group is an example of a crossed simplicial group, with trivial actions of $\mathfrak{G}_m$ on $\text{Hom}_\Delta([m],[n])$.
\end{example}

\begin{example}
The most well documented example of a crossed simplicial group is Connes' cyclic category, which is used in the theory of non-commutative geometry (see    \cite{connes1,connes2}), and the theory of cyclic homology (see \cite{loday1998cyclic}). Let $\mathfrak{G}_n = C_{n+1}$ with generator $\tau_{n}$ such that $(\tau_{n})^{n+1} = \text{id}_n$.   Then $\Delta \mathfrak{G} = \Lambda$.  To remain consistent with the notation we will be using throughout, we will now denote this category $\Delta \mathfrak{C}$.
\end{example}

\begin{definition}
There is a crossed simplicial group $\Delta \mathfrak{W}$ called the \textit{Weyl crossed simplicial group} where $\mathfrak{G}_{n} = W_{n+1} = C_2 \wr S_{n+1}$, the Weyl group of the $B_n$ root system.  The groups $W_{n+1}$ are sometimes referred to as the \textit{hyperoctahedral groups} (see    \cite{octahedral}).
\end{definition}

\begin{proposition}[{\cite[Theorem 1.7]{surface}}]\label{classification}
Let $\Delta \mathfrak{G}$ be a crossed simplicial group.
\begin{enumerate}
\item There is a canonical functor $\pi \colon  \Delta \mathfrak{G} \to \Delta \mathfrak{W}$.
\item For every $n \geq 0$, there is an induced short exact sequence of groups
$$1 \to \mathfrak{G}_n' \to \mathfrak{G}_n \to \mathfrak{G}_n'' \to 1$$
where $\mathfrak{G}_n'$ is the kernel and $\mathfrak{G}_n''$ is the image of the homomorphism $\pi_n \colon  \mathfrak{G}_n \to W_{n+1}$.
\item The above short exact sequence assembles to a sequence of functors
$$\Delta \mathfrak{G}_n' \to \Delta \mathfrak{G}_n \to \Delta \mathfrak{G}_n''$$
where $\Delta \mathfrak{G}_n'$ is a simplicial group and $\Delta \mathfrak{G}_n'' \subset \Delta \mathfrak{W}$ is a crossed simplicial subgroup of $\Delta \mathfrak{W}$.
\end{enumerate}
\end{proposition}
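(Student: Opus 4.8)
The plan is to build the functor $\pi$ by assigning to each automorphism its \emph{underlying signed permutation}, after which parts (2) and (3) follow almost formally from the existence and naturality of $\pi$.

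For (1) I would define $\pi$ first on automorphisms. Let $v_j \colon [0] \to [n]$ denote the $j$-th vertex of $[n]$ in $\Delta$, for $0 \le j \le n$. Given $g \in \mathfrak{G}_n$, the composite $g \circ i(v_j) \colon i[0] \to i[n]$ has source $i[0]$, so its canonical decomposition reads $g \circ i(v_j) = i(v_{\sigma(j)}) \circ h_j$ with $\sigma(j) \in \{0,\dots,n\}$ and $h_j \in \mathfrak{G}_0$. As $g$ is invertible, $j \mapsto \sigma(j)$ is a permutation in $S_{n+1}$, while the $h_j$, recorded through the local orientation map $\mathfrak{G}_0 \to W_1 = C_2$, supply a sign at each vertex; I set $\pi_n(g) = (\sigma; h_0, \dots, h_n) \in C_2 \wr S_{n+1} = W_{n+1}$. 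Expanding $(g g') \circ i(v_j)$ by decomposing $g'$ first and then pushing $g$ past the resulting vertex reproduces exactly the wreath-product law $(\sigma\sigma'; (h_{\sigma'(j)} h'_j)_j)$, so each $\pi_n$ is a homomorphism. To promote this to a functor $\pi \colon \Delta\mathfrak{G} \to \Delta\mathfrak{W}$, I would extend it to a general morphism via its canonical decomposition, $\pi\big(i(\phi) \circ g\big) = i(\phi) \circ \pi_m(g)$, and verify compatibility with composition from the uniqueness of canonical decompositions together with the relation governing how an automorphism commutes past a map of $\Delta$.

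Part (2) is then purely formal: for any group homomorphism $\pi_n$ the sequence $1 \to \ker \pi_n \to \mathfrak{G}_n \to \operatorname{im} \pi_n \to 1$ is exact, so we put $\mathfrak{G}_n' = \ker \pi_n$ and $\mathfrak{G}_n'' = \operatorname{im} \pi_n$. For (3), naturality of $\pi$ in $[n]$ forces the faces and degeneracies to carry kernels to kernels and images to images; hence the $\mathfrak{G}_n'$ assemble into a sub-crossed-simplicial-group $\Delta\mathfrak{G}'$ and the $\mathfrak{G}_n''$ into $\Delta\mathfrak{G}'' \subset \Delta\mathfrak{W}$. Moreover the composite $\Delta\mathfrak{G}' \hookrightarrow \Delta\mathfrak{G} \xrightarrow{\pi} \Delta\mathfrak{W}$ lands in identity automorphisms by construction, so every element of $\mathfrak{G}_n'$ has trivial underlying signed permutation and therefore acts trivially on $\operatorname{Hom}_\Delta([m],[n])$; by the Example identifying simplicial groups with the trivial-action crossed simplicial groups, $\Delta\mathfrak{G}'$ is a simplicial group.

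The main obstacle is (1): proving that the signed-permutation assignment is genuinely well defined (including the base-case orientation map out of $\mathfrak{G}_0$) and, above all, \emph{natural in the simplicial direction}, i.e.\ that $\pi$ commutes with every face and degeneracy rather than merely being a homomorphism on each $\mathfrak{G}_n$ in isolation. This naturality is exactly where the uniqueness clause of the canonical decomposition carries the argument, and it is the technical heart of the statement; granting it, (2) and (3) reduce to the formal kernel--image factorisation described above.
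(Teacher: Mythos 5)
The paper does not actually prove this proposition: it is quoted verbatim from Dyckerhoff--Kapranov (the cited Theorem~1.7 of \emph{Crossed simplicial groups and structured surfaces}, going back to Fiedorowicz--Loday), so there is no in-paper argument to compare against. Measured against the standard proof in those sources, your outline follows essentially the same route: $\pi$ is built from the ``underlying signed permutation'' read off the canonical decomposition at vertices, the wreath-product law $(\sigma\sigma';(h_{\sigma'(j)}h'_j)_j)$ is exactly the right composition check, and your treatment of (2) and (3) --- kernel/image factorisation, plus the observation that elements of $\ker\pi_n$ act trivially on $\operatorname{Hom}_\Delta([m],[n])$ and hence form a simplicial group --- is correct and genuinely formal once (1) is in place.

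The one point I would push back on concretely is your ``local orientation map $\mathfrak{G}_0 \to W_1 = C_2$.'' This does not exist for the abstract group $\mathfrak{G}_0$: since $\operatorname{Hom}_\Delta([m],[0])$ is a singleton for every $m$, the canonical decomposition forces $\mathfrak{G}_0$ to act trivially on all $\Delta$-morphisms into $[0]$, so no sign can be extracted at level $0$ alone. The orientation character has to be manufactured from the crossed simplicial structure one level up, e.g.\ as the composite $\mathfrak{G}_0 \xrightarrow{\,\sigma_0^\ast\,} \mathfrak{G}_1 \to S_2 \cong C_2$, where the second map records the induced permutation of the two vertices of $[1]$; one must then check this is a homomorphism and that the resulting signs are compatible with the permutation part under faces and degeneracies. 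That verification, together with the claim that $\pi_n(g)$ acts on $\operatorname{Hom}_\Delta([m],[n])$ in $\Delta\mathfrak{W}$ exactly as $g$ does in $\Delta\mathfrak{G}$ (which is what makes $\pi$ a functor and what you need for (3)), is the actual content of the theorem. You correctly flag both as the ``main obstacle,'' but as written they remain unproved, so the proposal is a sound plan rather than a complete proof.
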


As a consequence of Proposition \ref{classification}, we see that the classification of crossed simplicial groups reduces to the classification of crossed simplicial subgroups of $\Delta \mathfrak{W}$.  The following corollary gives these subgroups, where we take the opportunity to fix the generators for all groups that we will be interested in.

\begin{corollary}
Any crossed simplicial group $\Delta \mathfrak{G}$ splits as a sequence of functors
$$\Delta \mathfrak{G}_n' \to \Delta \mathfrak{G}_n \to \Delta \mathfrak{G}_n''$$
such that $\Delta \mathfrak{G} ' $ is a simplicial group and $\Delta \mathfrak{G}''$ is one of the following seven crossed simplicial groups:

\begin{itemize}
\item $\Delta$  - The \textit{trivial} crossed simplicial group.
\item $\Delta \mathfrak{C}$ -  The \textit{cyclic} crossed simplicial group.
$$\mathfrak{C}_n =  C_{n+1} = \langle \tau_n \mid \tau_n^{n+1} = 1 \rangle$$
\item $\Delta \mathfrak{S}$ -  The \textit{symmetric} crossed simplicial group.
$$\mathfrak{S}_n =  S_{n+1}  = \langle \sigma_1, \dots , \sigma_n \mid \sigma_i^{2} = 1, \; \sigma_i \sigma_j = \sigma_j \sigma_i \text{ if } j \neq i \pm 1, \; (\sigma_i \sigma_{i+1})^3 = 1 \rangle$$
\item $\Delta \mathfrak{R}$ - The \textit{reflexive} crossed simplicial group.
$$\mathfrak{R}_n = C_2 = \langle \omega \mid \omega^2 = 1 \rangle$$
\item $\Delta \mathfrak{D}$  - The \textit{dihedral} crossed simplicial group.
$$\mathfrak{D}_n = D_{n+1} = \langle \tau_n, \omega \mid \tau_n^{n+1}= \omega^2=  (\tau_n \omega)^2=1 \rangle $$
\item $\Delta \mathfrak{T}$  - The \textit{reflexosymmetric} crossed simplicial group.
$$\mathfrak{T}_n = T_{n+1} = C_2 \ltimes S_{n+1}$$
$$= \langle \omega, \sigma_1, \dots , \sigma_n \mid \sigma_i^2=\omega^2=(\sigma_i \sigma_{i+1})^3 = 1, \omega \sigma_i = \sigma_i \omega , \sigma_i \sigma_j = \sigma_j \sigma_i\rangle$$
\item $\Delta \mathfrak{W}$ - The \textit{Weyl crossed simplicial group}.
$$\mathfrak{W}_n = W_{n+1} =  C_2 \wr S_{n+1} $$
$$ = \langle \sigma_1, \dots , \sigma_n, \kappa \mid \sigma_i^2=\kappa^2=(\sigma_i \sigma_{i+1})^3 = (\sigma_1 \kappa)^4=(\sigma_i \kappa)^2=1 \rangle$$
\end{itemize}
\end{corollary}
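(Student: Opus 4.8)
The plan is to build on Proposition \ref{classification}: it already yields the sequence $\Delta\mathfrak{G}_n' \to \Delta\mathfrak{G}_n \to \Delta\mathfrak{G}_n''$ together with the facts that $\Delta\mathfrak{G}'$ is a simplicial group and that $\Delta\mathfrak{G}''$ embeds as a crossed simplicial subgroup of $\Delta\mathfrak{W}$. The first two clauses of the corollary are thus immediate, and everything reduces to a single enumeration: the crossed simplicial subgroups of $\Delta\mathfrak{W}$ are exactly the seven listed. So I would spend the proof classifying the subobjects $\Delta\mathfrak{H} \hookrightarrow \Delta\mathfrak{W}$, which amount to a choice of subgroup $H_n \leq W_{n+1} = \mathfrak{W}_n$ for each $n$ that is closed under the crossed simplicial structure, i.e. under the transport of automorphisms across faces and degeneracies furnished by the canonical decomposition.

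I would organise the analysis along the wreath-product sequence $1 \to C_2^{\,n+1} \to W_{n+1} \to S_{n+1} \to 1$, separating the underlying permutation from the signs. First I would treat the permutation image $P_n$, the projection of $H_n$ to $S_{n+1}$, which itself forms a crossed simplicial subgroup of $\Delta\mathfrak{S}$. The key mechanism is propagation: transporting a generator across a degeneracy shifts its index, so a single adjacent transposition $\sigma_i$ forces all transpositions and hence all of $S_{n+1}$, whereas the rotation $\tau_n$ generates only the cyclic groups $C_{n+1}$ and the order-reversal generates only a copy of $C_2$. Combining these, and ruling out would-be subgroups such as the alternating groups (whose even-parity condition is not preserved by the face transport, which deletes a point), I would show $P_n$ is one of $\{1\}$, $C_{n+1}$, the reflexive $C_2$, the dihedral $D_{n+1}$, or $S_{n+1}$ in every degree.

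It then remains to determine the sign part $K_n = H_n \cap C_2^{\,n+1}$, which must be invariant under $P_n$ and closed under transport. Here I would argue that when $P_n$ is a proper subgroup of $S_{n+1}$ the only compatible $K_n$ is trivial, while when $P_n = S_{n+1}$ the admissible choices are exactly $\{1\}$, the central diagonal $C_2$, and the full base $C_2^{\,n+1}$; reassembling the extension gives $\Delta\mathfrak{S}$, $\Delta\mathfrak{T}$, and $\Delta\mathfrak{W}$ respectively. Together with the permutation-only cases this produces precisely $\Delta, \Delta\mathfrak{C}, \Delta\mathfrak{R}, \Delta\mathfrak{D}, \Delta\mathfrak{S}, \Delta\mathfrak{T}, \Delta\mathfrak{W}$, recovering the classification of \cite{loday}, and one finally checks that each surviving pair $(P_n, K_n)$ does assemble with the stated presentation.

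The step I expect to be the main obstacle is precisely this sign analysis. The delicate points are showing that the intermediate $S_{n+1}$-invariant subgroups of $C_2^{\,n+1}$ -- above all the even-sign subgroup -- fail to be stable under the face and degeneracy transport and so do not occur, and confirming that a nontrivial sign part cannot coexist with a permutation image smaller than $S_{n+1}$ (the latter because transporting a central sign produces genuine permutations, forcing $P_n$ to grow). Pinning down the extension class, so that the image $S_{n+1}$ is realised by the honest permutation splitting rather than a twisted lift, and verifying the resulting presentations are the remaining, more routine, pieces.
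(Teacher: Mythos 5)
The first thing to note is that the paper does not prove this corollary at all: it is stated as an immediate consequence of Proposition \ref{classification} together with the enumeration of crossed simplicial subgroups of $\Delta\mathfrak{W}$, which is imported wholesale from Fiedorowicz--Loday (and Dyckerhoff--Kapranov). Your reduction to that enumeration therefore matches the paper exactly; where you diverge is in attempting to supply the enumeration yourself, and your strategy --- separate the permutation image $P_n \leq S_{n+1}$ from the sign part $K_n = H_n \cap C_2^{\,n+1}$ and argue by propagation of generators under the face and degeneracy transports --- is essentially the argument in the original source. The permutation half of your analysis is sound, including the observation that the alternating groups are excluded because parity is not preserved by face transport (e.g.\ $\tau_2$ is even while $d_1(\tau_2)=\tau_1$ is odd).

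The genuine gap is in the second half, and it is not only that you defer the ``delicate'' sign analysis: the pair $(P_n, K_n)$ does not determine the subgroup $H_n$. You must also classify the admissible lifts of $P_n$ into $W_{n+1}/K_n$, and these are not always the honest permutation splitting. The paper's own Weyl bar construction makes this visible: the reflexive generator acts by $\omega(g_0,\dots,g_n)=(g_n^{-1},\dots,g_0^{-1})$, i.e.\ inside $W_{n+1}$ it is the order-reversal permutation composed with \emph{all} of the sign inversions. So $\Delta\mathfrak{R}$ and $\Delta\mathfrak{D}$ occur as twisted lifts with $K_n=1$, the untwisted order-reversal need not be closed under transport, and your concluding remark that the image is realised ``by the honest permutation splitting rather than a twisted lift'' fails in exactly these cases (and must be separately verified, not assumed, when $P_n=S_{n+1}$). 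A complete proof has to run the face--degeneracy transport relations to rule out both the unwanted invariant sign subgroups (such as the even-sign subgroup of $C_2^{\,n+1}$, as you note) and the unwanted lifts; since that computation is the entire content of the statement and is only promised in your sketch, the proposal as written is an architecture for a proof rather than a proof --- though it is the right architecture and, once completed, recovers the Fiedorowicz--Loday list of seven.
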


\begin{definition}
We will call the above seven crossed simplicial groups the \emph{simple crossed simplicial groups}.  Note that these crossed simplicial groups have the following inclusion structure in their groups $\mathfrak{G}_n$:
$$\xymatrix@=1em{&& W_{n+1}&\\
&&T_{n+1} \ar[u]&\\
&D_{n+1} \ar[ur]&&S_{n+1} \ar[ul]\\
R_{n+1} \ar[ur]&&C_{n+1} \ar[ur]  \ar[ul]&\\
&1 \ar[ul] \ar[ur]&&}$$
\end{definition}

\begin{example}
An interesting example of a crossed simplicial group arising from the classification theorem uses the braid groups. We denote by $B_n$ the braid group on $n$ braids.  There is a surjection $\mu \colon  B_n \to S_n$ which has kernel $P_n$, the pure braid group.  The family of braid groups $(B_{n+1})_{n \geq 0}$ assembles to a crossed simplicial group $\Delta \mathfrak{B}$ which is given by the extension via the classification theorem:
$$\Delta \mathfrak{P} \to \Delta \mathfrak{B} \to \Delta \mathfrak{S}$$
where $\Delta \mathfrak{P}$ is the simplicial group of pure braids. 
\end{example}

As with the simplex category $\Delta$, our interest with crossed simplicial groups lies in the properties of their presheaf categories.
\begin{definition}
Let $\Delta \mathfrak{G}$ be a crossed simplicial group, $\mathscr{C}$ a category.  A \textit{$\Delta \mathfrak{G}$-object in $\mathscr{C}$} is defined to be a functor:
$$X \colon  (\Delta \mathfrak{G})^{op} \to \mathscr{C}.$$
We shall denote such a functor as $X_\bullet$ with $X_n$ being the image of $[n]$.  If $\lambda \colon  [m] \to [n]$ is a morphism in $\Delta \mathfrak{G}$ we shall write $\lambda^\ast \colon  X_n \to X_m$ for the associated morphism in $X(\lambda)$.  We shall denote the category of all such objects as $\Delta \mathfrak{G} \text{-} \mathscr{C}$.
\end{definition}

For computational reasons it is more convenient to consider a $\Delta \mathfrak{G}$-object as a simplicial object with some extra structure.

\begin{proposition}[{\cite[Lemma 4.2]{loday}}]\label{gobject}
A $\Delta \mathfrak{G}$-object in a category $\mathscr{C}$ is equivalent to a simplicial object $X_\bullet$ in $\mathscr{C}$ with the following additional structure:

\begin{itemize}
\item Left group actions $\mathfrak{G}_n \times X_n \to X_n$.
\item Face relations $d_i(gx) = d_i(g)(d_{g^{-1}(i)}x)$.
\item Degeneracy relations $s_i(gx) = s_i(g)(s_{g^{-1}(i)€}x)$.
\end{itemize}

In particular a $\Delta \mathfrak{G}$-map $f_\bullet \colon  X_\bullet \to Y_\bullet$ is the same thing as a simplicial map such that each of the $f_n \colon  X_n \to Y_n$ is $\mathfrak{G}_n$-equivariant.
\end{proposition}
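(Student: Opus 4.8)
The plan is to build the equivalence directly out of the canonical decomposition guaranteed by the second clause in the definition of a crossed simplicial group: every morphism $u$ of $\Delta\mathfrak{G}$ factors uniquely as $i(\phi)\circ g$ with $\phi$ a morphism of $\Delta$ and $g$ an automorphism of the source. This single fact is what lets us pass back and forth between a functor on $(\Delta\mathfrak{G})^{op}$ and a simplicial object carrying compatible group actions. I would first set up a bijection on objects (functors versus structured simplicial objects) and then check that it is compatible with morphisms, yielding an equivalence---indeed an isomorphism---of categories.

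For the forward direction, given $X\colon(\Delta\mathfrak{G})^{op}\to\mathscr{C}$ I would restrict along the embedding $i\colon\Delta\hookrightarrow\Delta\mathfrak{G}$ (which is bijective on objects) to obtain the underlying simplicial object $X_\bullet$. Since $X$ is contravariant, each $\mathfrak{G}_n=\mathrm{Aut}_{\Delta\mathfrak{G}}([n])$ acts on $X_n$ on the left, the inverse being inserted to turn the contravariant composition into a genuine left action. The face and degeneracy relations are then forced: for $g\in\mathfrak{G}_n$ the composite of $g$ with a coface $\delta_i$ has its own canonical decomposition, of the shape $\delta_{j}\circ g'$, and it is exactly this decomposition that names the underlying set-map $i\mapsto g^{-1}(i)$ and the induced automorphism $d_i(g)\in\mathfrak{G}_{n-1}$; applying the functor $X$ to this identity of morphisms produces the stated relation $d_i(gx)=d_i(g)(d_{g^{-1}(i)}x)$, and the degeneracy relation arises identically from the codegeneracies $\sigma_i$.

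For the reconstruction direction, given a simplicial object with the listed extra structure I would define $X$ on objects by $X([n])=X_n$ and on a morphism $u=i(\phi)\circ g$, written in canonical form, by $u^*$ equal to $\phi^*$ composed with the action of $g$. Well-definedness is immediate from the uniqueness of the canonical decomposition, so the entire burden is functoriality: preservation of identities is clear, and for composites one writes both factors in canonical form and must rewrite their composite into canonical form, which amounts to pushing the inner automorphism past the outer $\Delta$-morphism. Factoring that $\Delta$-morphism into cofaces and codegeneracies reduces this to the elementary moves, which are governed precisely by the face and degeneracy relations together with the ordinary simplicial identities and the action axioms.

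The main obstacle is the consistency of this rewriting: the automorphism must emerge the same way regardless of how the intervening $\Delta$-morphism is factored into generators. I would dispatch this not by a separate coherence computation but by appealing once more to the uniqueness in the canonical decomposition---any two factorizations are computing the canonical form of one and the same morphism of $\Delta\mathfrak{G}$, so they must agree---leaving only the verification that each individual elementary relation is respected, which is what the listed relations supply. Finally, a natural transformation $X\to Y$ restricts to a simplicial map whose components are $\mathfrak{G}_n$-equivariant, naturality against the automorphisms being equivariance verbatim, while conversely an equivariant simplicial map extends uniquely over all of $\Delta\mathfrak{G}$ by the canonical decomposition; this matching of morphisms upgrades the object-level bijection to the desired equivalence of categories.
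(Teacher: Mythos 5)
The paper gives no proof of this proposition; it is quoted directly from Fiedorowicz--Loday (\cite[Lemma 4.2]{loday}), and your argument is essentially the one given there: restrict along $i$ to get the underlying simplicial object, read the face and degeneracy relations off the canonical decomposition of $g\circ\delta_i$ and $g\circ\sigma_i$, and in the converse direction use the uniqueness of the canonical decomposition to see that the elementary rewriting moves assemble into a well-defined contravariant functor. Your outline is correct and matches the intended proof.
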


As a consequence of Proposition \ref{gobject}, we can give concrete combinatorial definitions of crossed simplicial group objects. We will denote the standard face and degeneracy maps of simplicial objects:
$$d_i \colon  [n] \to [n-1],$$
$$s_i \colon  [n] \to [n+1],$$
subject to the usual relations.

\begin{example}\label{come}
For objects over the dihedral category $\Delta \mathfrak{D}$, we have a simplicial objects along with the following additional generators: 
$$\omega_n, \tau_{n}\colon  [n] \to [n]$$
subject to the following relations:
$$\omega_n^2 =\tau_n^{n+1} = \text{id}\colon  [n] \to [n],$$
$$(\tau_n \omega_n)^2 = \text{id}\colon  [n] \to [n],$$
$$d_i \tau_{n} = \tau_{n-1}d_{i-1}\colon  [n] \to [n-1], \; s_i \tau_n = \tau_{n+1} s_{i-1}\colon [n] \to [n+1] \; \text{for } 1 \leq i \leq n,$$
$$d_i \omega_{n} = \omega_{n-1}d_{n-i}\colon  [n] \to [n-1], \; s_i \omega_n = \omega_{n+1} s_{n-i}\colon [n] \to [n+1] \; \text{for } 1 \leq i \leq n,$$
$$d_0 \tau_n = d_n\colon [n] \to [n-1], \; s_0 \tau_{n} = \tau_{n+1}^2 s_n\colon [n] \to [n+1] \; \text{for } n \geq 1.$$
\end{example}

\section{$\Delta \mathfrak{G}$-Bar Constructions of Groups}
Recall that for $G$ a group we can construct the \textit{bar construction} of $G$ which is the simplicial object $\overline{B(G)}$ which in dimension $n$ is equal to $G^{n+1}$, where the face maps act by multiplication and the degeneracy maps act by insertion of the identity element (see    \cite{milnor}).  We can extend this idea to simple crossed simplicial groups, this was done in the cyclic case by Loday \cite[\S 7.3.10]{loday1998cyclic}.

\begin{definition}[Cyclic Bar Construction]
Let $G$ be a group and let $\overline{B(G)}$  be the bar construction of $G$.  We define the \textit{cyclic bar construction} $\overline{B(G)}^{ \mathfrak{C}}$ with the action of the cyclic generator $\tau_n$ on $\overline{B_n(G)}$ being:
$$\tau_n(g_0, \dots , g_n) = (g_n,g_0, \dots , g_{n-1}).$$
This construction is also known in the literature as the \textit{cyclic nerve construction}.
\end{definition}

\begin{lemma}
$\overline{B(G)}^{ \mathfrak{C}}$ is a cyclic group.
\end{lemma}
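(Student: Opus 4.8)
The plan is to realise $\overline{B(G)}^{\mathfrak{C}}$ as a cyclic object via the characterisation of Proposition \ref{gobject}. The underlying object $\overline{B(G)}$ is already a simplicial object, so the only thing to supply is the compatible $\mathfrak{C}_n = C_{n+1}$-action on $\overline{B_n(G)} = G^{n+1}$ generated by $\tau_n$; by Proposition \ref{gobject} it then suffices to check that $\tau_n$ satisfies the cyclic relations, which in the concrete form of Example \ref{come} (keeping only the $\tau$-generator) read
$$\tau_n^{n+1} = \mathrm{id}, \qquad d_0\tau_n = d_n, \qquad d_i\tau_n = \tau_{n-1}d_{i-1} \;\; (1 \le i \le n),$$
$$s_0\tau_n = \tau_{n+1}^2 s_n, \qquad s_i\tau_n = \tau_{n+1}s_{i-1} \;\; (1 \le i \le n).$$
Verifying these five families of identities is the whole content of the lemma.

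First I would dispatch the group action itself. The assignment $\tau_n(g_0,\dots,g_n) = (g_n,g_0,\dots,g_{n-1})$ is the cyclic shift of the $n+1$ coordinates of $G^{n+1}$, hence a bijection, and iterating it $n+1$ times returns each coordinate to its original position, so $\tau_n^{n+1} = \mathrm{id}$ and $\langle \tau_n\rangle \cong C_{n+1}$ acts. The remaining relations I would check by evaluating both sides on a general tuple. For instance the boundary face relation is immediate:
$$d_0\tau_n(g_0,\dots,g_n) = d_0(g_n,g_0,\dots,g_{n-1}) = (g_ng_0,g_1,\dots,g_{n-1}) = d_n(g_0,\dots,g_n).$$
Each interior relation $d_i\tau_n = \tau_{n-1}d_{i-1}$ amounts to the observation that, after the shift, multiplying the coordinates in slots $i,i+1$ produces the same product $g_{i-1}g_i$ as multiplying slots $i-1,i$ before the shift, so both sides agree term by term; the degeneracy relations reduce in the same way to the fact that inserting an identity entry commutes with the shift up to the indicated reindexing.

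The hard part, such as it is, will be the bookkeeping in the two ``wrap-around'' relations $d_0\tau_n = d_n$ and $s_0\tau_n = \tau_{n+1}^2 s_n$: these are the identities encoding the passage from the last coordinate back to the first, and they behave differently from the interior relations because $d_n$ multiplies $g_n$ into $g_0$ and $s_0\tau_n$ must be matched against a double rotation $\tau_{n+1}^2$. Once the indices in these two cases are pinned down -- everything else being a single relabelling together with associativity of multiplication in $G$ -- Proposition \ref{gobject} assembles the verified relations into the desired cyclic structure on $\overline{B(G)}^{\mathfrak{C}}$.
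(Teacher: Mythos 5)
Your proposal is correct and its core step is the same as the paper's: the cyclic shift $\tau_n$ on $G^{n+1}$ is the permutation $(0\,1\,\cdots\,n)$ and hence has order $n+1$. You go further than the paper by also verifying (or at least sketching) the compatibility of $\tau_n$ with the faces and degeneracies, including the wrap-around identities $d_0\tau_n = d_n$ and $s_0\tau_n = \tau_{n+1}^2 s_n$; the paper simply asserts that $\tau_n^{n+1}=\mathrm{id}$ is ``all that needs to be checked,'' so your extra care is harmless and arguably fills in what the paper leaves implicit.
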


\begin{proof}
All that needs to be checked is that $\tau_n^{n+1}=\text{id}$.  This follows as the action of $\tau_n$ can be represented as the cycle $(012 \cdots n) \in S_{n+1}$ which has order $n+1$.
\end{proof}

Bar constructions have been considered for all crossed simplicial groups with an application of homology theory.  The definitions that we will give for the bar constructions differ from those which can be found in the literature as the construction presented here is used to generalise the nerve of the category.

\begin{definition}[$\Delta \mathfrak{G}$-Bar Construction]\label{bar}
Let $G$ be a group:
\begin{enumerate}
\item \textit{Symmetric} - We define $\overline{B(G)}^{ \mathfrak{S}}$ to be $\overline{B(G)}$ along with the action of the symmetric generators $\sigma_i$
$$\sigma_i(g_0, \dots ,g_{i-1}, g_i ,\dots , g_n) = (g_0, \dots ,g_{i}, g_{i-1} , \dots , g_n).$$
\item \textit{Reflexive} - We define $\overline{B(G)}^{ \mathfrak{R}}$ to be $\overline{B(G)}$ along with the action of the reflexive generator $\omega$
$$\omega(g_0, \dots , g_n) = (g_n^{-1}, \dots ,  g_0^{-1}).$$
\item \textit{Dihedral} - We define $\overline{B(G)}^{ \mathfrak{D}}$ to be $\overline{B(G)}$ along with the action of the reflexive generator $\omega$ and the operation of the cyclic generator $\tau$ as above.
\item \textit{Reflexosymmetric} - We define $\overline{B(G)}^{ \mathfrak{T}}$ to be $\overline{B(G)}$ along with the action of the reflexive generator $\omega$ and the symmetric generators $\sigma_i$ as above.
\item \textit{Weyl} - We define $\overline{B(G)}^{ \mathfrak{W}}$ to be $\overline{B(G)}$ along with the action of the symmetric generators $\sigma_i$ as above.  Additionally we have the generator $\kappa$ which acts via
$$\kappa(g_0, \dots , g_n) = (g_0^{-1}, g_1, \dots , g_n).$$
\end{enumerate}
\end{definition}

\begin{proposition}\label{barprop}
Let $\Delta \mathfrak{G}$ be a simple crossed simplicial group.  Then for a group $G$, we have the $\Delta \mathfrak{G}$-bar construction $\overline{B(G)}^{\mathfrak{G}}$, as in Definition \ref{bar},  is a $\Delta \mathfrak{G}$-group.
\end{proposition}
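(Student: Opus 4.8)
The plan is to invoke Proposition \ref{gobject}, which identifies a $\Delta\mathfrak{G}$-object with a simplicial object carrying compatible $\mathfrak{G}_n$-actions. Since $\overline{B(G)}$ is already a simplicial object, it remains only to verify two things for each simple crossed simplicial group: first, that the prescribed generator actions assemble into genuine actions of $\mathfrak{G}_n$ on $\overline{B_n(G)} = G^{n+1}$, that is, that they satisfy the defining relations of $\mathfrak{G}_n$ recorded in the classification corollary; and second, that they satisfy the face and degeneracy compatibility relations of Proposition \ref{gobject} (equivalently, the crossed-simplicial relations, of which Example \ref{come} is a representative sample). Because the seven groups are generated by the four actions $\tau_n, \sigma_i, \omega, \kappa$, I would organise the verification generator by generator, checking each relation once and then assembling the composite cases $\Delta\mathfrak{D}, \Delta\mathfrak{T}, \Delta\mathfrak{W}$ from the cross-relations among generators. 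The trivial case $\Delta$ carries no extra generators and is immediate.

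First, the group relations. The case of $\tau_n$ is exactly the preceding lemma. For $\sigma_i$ the action permutes adjacent coordinates, so it realises the standard Coxeter presentation of $S_{n+1}$; the involution, commutation, and braid relations hold because they already hold for the underlying permutations of $\{0,\dots,n\}$. For $\omega$ a direct check gives $\omega^2(g_0,\dots,g_n) = (g_0,\dots,g_n)$ since $(g^{-1})^{-1}=g$. For $\kappa$, inverting the first coordinate twice gives the identity; the coordinates untouched by both $\sigma_i$ and $\kappa$ yield the commutations $(\sigma_i\kappa)^2 = 1$ for $i \geq 2$; and the length-four relation $(\sigma_1\kappa)^4 = 1$ is the type-$B$ Coxeter relation, which I would confirm by tracking the effect on the pair $(g_0, g_1)$.

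Second, the face and degeneracy relations. For $\tau_n$ these are classical (the cyclic nerve), and the degeneracy relations in general are comparatively benign, since the degeneracies insert the identity element and so create no products to reorder. The substance lies with the face relations for $\omega$ and $\kappa$, where the coordinatewise inversions interact with the multiplicative face maps. The mechanism is that $\omega$ reverses the order of the tuple while inverting entries, and since $d_i$ multiplies adjacent coordinates, the anti-homomorphism identity $(ab)^{-1}=b^{-1}a^{-1}$ is exactly what makes relations such as $d_i\omega_n = \omega_{n-1}d_{n-i}$ hold. I expect the main obstacle to be the index bookkeeping at the wrap-around face map $d_n$ (and at $d_0$) of the bar construction, where the reflection and inversion must be matched against the boundary multiplication; the $\kappa$-relations are analogous but localised at the first coordinate and should be slightly easier.

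Finally, for the composite groups I would verify the remaining cross-relations directly on $G^{n+1}$: $(\tau_n\omega)^2 = \mathrm{id}$ for $\Delta\mathfrak{D}$, the commutations $\omega\sigma_i = \sigma_i\omega$ for $\Delta\mathfrak{T}$, and the compatibilities already subsumed in the presentation of $\mathfrak{W}_n$ for $\Delta\mathfrak{W}$, each of which reduces to a short computation. Assembling these verifications with Proposition \ref{gobject} yields that $\overline{B(G)}^{\mathfrak{G}}$ is a $\Delta\mathfrak{G}$-object for every simple crossed simplicial group, as claimed.
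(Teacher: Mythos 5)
Your proposal is correct and follows essentially the same route as the paper: a case-by-case verification that the explicit generator actions on $G^{n+1}$ satisfy the presentations of $\mathfrak{G}_n$ (the braid relation for $\sigma_i$, $\omega^2=1$ via $(g^{-1})^{-1}=g$, $(\tau_n\omega)^2=1$, $(\sigma_1\kappa)^4=1$), with the composite cases assembled from the basic ones. If anything you are more thorough, since you sketch why the face and degeneracy compatibilities of Proposition \ref{gobject} hold (the anti-homomorphism identity $(ab)^{-1}=b^{-1}a^{-1}$ matching the reversal in $\omega$), whereas the paper simply asserts that these can be checked.
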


\begin{proof}
This can be proved case by case, showing that the generators abide to the combinatorics.  We will show that the generators above satisfy the group axioms.  It can be checked that these generators respect the face and degeneracy map operations.
\begin{enumerate}
\item \textit{Symmetric} - $\langle \sigma_1, \dots , \sigma_n \mid \sigma_i^{2} = 1, \; \sigma_i \sigma_j = \sigma_j \sigma_i \text{ if } j \neq i \pm 1, \; (\sigma_i \sigma_{i+1})^3 = 1 \rangle$.  \\
The first two relations are trivial, so we will only show the last one.
\begin{align*}
(\sigma_i \sigma_{i+1})^3(g_0, \dots , g_{i-1}, g_{i}, g_{i+1}, \dots , g_n) &=(\sigma_i \sigma_{i+1})^2(g_0, \dots , g_{i+1}, g_{i-1}, g_{i}, \dots , g_n)\\
&=(\sigma_i \sigma_{i+1})(g_0, \dots , g_{i}, g_{i+1}, g_{i-1}, \dots , g_n)\\
&=(g_0, \dots , g_{i-1}, g_{i}, g_{i+1}, \dots , g_n)
\end{align*}
\item \textit{Reflexive} - $ \langle \omega \mid \omega^2 = 1 \rangle$.\\ 
This case is obvious as we have $(g_i^{-1})^{-1}=g_i$.
\item \textit{Dihedral} - $\langle \tau_n, \omega \mid \tau_n^{n+1}= \omega^2=  (\tau_n \omega)^2=1 \rangle$. \\ 
We have already shown the validity of the cyclic and reflexive operator, therefore we need only show the final relation:
\begin{align*}
\tau_n \omega \tau_n \omega(g_0, \dots, g_n) &= \tau_n \omega \tau_n(g_n, \dots, g_0)\\
&=\tau_n \omega (g_0, g_n, \dots, g_1)\\
&=\tau_n  (g_1, \dots,g_n, g_0)\\
&= (g_0, \dots, g_n)
\end{align*}
\item \textit{Reflexosymmetric} - $\langle \omega, \sigma_1, \dots , \sigma_n \mid \sigma_i^2=\omega^2=(\sigma_i \sigma_{i+1})^3 = 1, \omega \sigma_i = \sigma_i \omega , \sigma_i \sigma_j = \sigma_j \sigma_i\rangle.$\\
This follows from the symmetric and reflexive case.
\item \textit{Weyl} - $\langle \sigma_1, \dots , \sigma_n, \kappa \mid \sigma_i^2=\kappa^2=(\sigma_i \sigma_{i+1})^3 = (\sigma_1 \kappa)^4=(\sigma_i \kappa)^2=1 \rangle$.   \\
Here the only trivial relation is $(\sigma_1 \kappa)^4 = \text{id}$:
\begin{align*}
(\sigma_1 \kappa)^4 (g_0, g_1, \dots , g_n) &= (\sigma_1 \kappa)^3 (g_1, g_0^{-1}, \dots , g_n)\\
&=(\sigma_1 \kappa)^2 (g_0^{-1}, g_1^{-1}, \dots , g_n)\\
&=(\sigma_1 \kappa) (g_1^{-1}, g_0, \dots , g_n)\\
&=(g_0, g_1, \dots , g_n)
\end{align*}
\end{enumerate}
\end{proof}

\section{$\Delta \mathfrak{G}$-Nerves of Categories}
We can extend the idea of the bar construction further than just groups. In fact we can construct a nerve on a category $\mathscr{C}$ and endow it with a $\Delta \mathfrak{G}$-structure provided that $\mathscr{C}$ has certain properties.  This is formalised in the work of Dykerhoff and Kapranov   \cite{surface} where they give a categorical definition of the $\Delta \mathfrak{G}$-categorical nerves.  In the case of the cyclic and dihedral category, this construction has been explicitly constructed by Connes and Loday respectively. We will extend this construction to the remaining simple crossed simplicial groups, in particular giving constructions for the symmetric and Weyl nerve of $\mathscr{C}$, which give us the relevant generators for the remaining case of the reflexosymmetric category.  The way we will do this is by considering a crossed simplicial group nerve of a category to be an $(n+1)$-tuple of composable morphisms $(a_0, \dots , a_n)$, and then using the generators from the bar constructions in the previous sections. In this case we must take special care that the sources and targets of the morphisms still match up.  Note that this construction differs from the classical nerve construction which is defined to be an $n$-tuple of composable morphisms in dimension $n$, this scenario will be covered by the twisted nerve constructions of Section \ref{sec:twist}.

\begin{definition}[Cyclic Nerve]
Let $\mathscr{C}$ be a category, its \textit{cyclic nerve} $N \mathscr{C}^{\mathfrak{C}}$ is defined to be the simplicial object such that in degree $n$ we have the $(n+1)$ maps in a diagram of the form:
$$\xymatrix{x_0 \ar[r]^{a_0} & x_1 \ar[r]^{a_1} & \cdots \ar[r]^{a_{n-1}}& x_n \ar[r]^{a_n} & x_0}$$
with the cyclic operator $\tau_{n}$ being the cyclic rotation of the diagram:
$$\tau_n\Big(\xymatrix{x_0 \ar[r]^{a_0} & x_1 \ar[r]^{a_1} & \cdots \ar[r]^{a_{n-1}}& x_n \ar[r]^{a_n} & x_0}\Big)$$
$$=\xymatrix{x_n \ar[r]^{a_n} & x_0 \ar[r]^{a_0} & \cdots \ar[r]^{a_{n-2}}& x_{n-1} \ar[r]^{a_{n-1}} & x_n}$$
\end{definition}

This construction works in all generality because the sources and targets of the morphisms always match up.  However, for the reflexive case we will need to be able to reverse the direction of all of the morphisms, so we will require the category to have some further properties.  This property is encoded in the notion of a \textit{dagger category}   \cite{Lambek1999293}.

\begin{definition}
A \textit{dagger category} is a category $\mathscr{C}$ equipped with an involutive functor $\dag\colon \mathscr{C}^{op} \to \mathscr{C}$ that is the identity on objects.  That is, to every morphism $f\colon A \to B$ in $\mathscr{C}$, we associate to it $f^\dag\colon B \to A$ such that for all $f\colon A \to B$ and $g: B \to C$
\begin{itemize}
\item $\text{id}_A = \text{id}_A^\dag\colon A \to A$.
\item $(g \circ f)^\dag = f^\dag \circ g^\dag :C \to A$.
\item $f^{\dag \dag} = f\colon A \to B$.
\end{itemize}
Note, that in particular, a groupoid has a dagger structure, with $f^\dag = f^{-1}$.
\end{definition}

\begin{definition}[Dihedral Nerve]
Let $\mathscr{C}$ be a dagger category, its \textit{dihedral nerve} $N \mathscr{C}^{\mathfrak{D}}$ is defined to be the simplicial object such that in degree $n$ we have the $(n+1)$ maps in a diagram of the form:
$$\xymatrix{x_0 \ar[r]^{a_0} & x_1 \ar[r]^{a_1} & \cdots \ar[r]^{a_{n-1}}& x_n \ar[r]^{a_n} & x_0}$$
with the reflexive operator $\omega$ being begin defined as follows:
$$\omega\Big(\xymatrix{x_0 \ar[r]^{a_0} & x_1 \ar[r]^{a_1} & \cdots \ar[r]^{a_{n-1}}& x_n \ar[r]^{a_n} & x_0}\Big)$$
$$=\xymatrix{x_0 \ar[r]^{a_n^\dag} & x_{n} \ar[r]^{a_{n-1}^\dag} & \cdots \ar[r]^{a_{1}^\dag}& x_1 \ar[r]^{a_0^\dag} & x_0}$$
and the cyclic operators $\tau_n$ as before.
\end{definition}

If we wish to consider the symmetric nerve then we face further limitations.  We would like for the symmetric operator $\sigma_i$ to swap the morphisms $a_i$ and $a_{i-1}$, while also sending $a_i \to a_i^\dag$ and $a_{i-1} \to a_{i-1}^\dag$ to match the fact that $S_2 \cong \mathbb{Z}/2\mathbb{Z}$.  In particular this forces $\text{target}(a_{i-2}) = \text{target}(a_i)$.  This condition therefore requires all of the objects appearing in the diagrams to be identical.

\begin{definition}[Symmetric Nerve]
Let $\mathscr{C}$ be a dagger category, its \textit{symmetric nerve} $N \mathscr{C}^{\mathfrak{S}}$ is defined to be the simplicial object such that in degree $n$ we have the $(n+1)$ maps in a diagram of the form:
$$\xymatrix{ x \ar[r]^{a_0} & x \ar[r]^{a_1} & \cdots \ar[r]^{a_{n-1}}&x \ar[r]^{a_n} & x}$$
The symmetric operator $\sigma_i$ acts on the diagram as follows:
$$\sigma_i\Big(\xymatrix{ x \ar[r]^{a_0} & x \ar[r]^{a_1} & \cdots \ar[r]^{a_{i-2}} & x \ar[r]^{a_{i-1}} & x \ar[r]^{a_i} & x \ar[r]^{a_{i+1}} & \cdots \ar[r]^{a_{n-1}}&x \ar[r]^{a_n} & x}\Big)$$
$$=\xymatrix{ x \ar[r]^{a_0} & x \ar[r]^{a_1} & \cdots \ar[r]^{a_{i-2}} & x \ar[r]^{a_{i}^\dag} & x \ar[r]^{a_{i-1}^\dag} & x \ar[r]^{a_{i+1}} & \cdots \ar[r]^{a_{n-1}}&x \ar[r]^{a_n} & x}$$
\end{definition}

\begin{definition}[Weyl Nerve]
Let $\mathscr{C}$ be a dagger category, its \textit{Weyl nerve} $N \mathscr{C}^{ \mathfrak{W}}$ is defined to be the simplicial object such that in degree $n$ we have the $(n+1)$ maps in a diagram of the form:
$$\xymatrix{ x \ar[r]^{a_0} & x \ar[r]^{a_1} & \cdots \ar[r]^{a_{n-1}}&x \ar[r]^{a_n} & x}$$
with the symmetric operators $\sigma_{i}$ as above, and the operator $\kappa$ acts on the diagram as follows:
$$\kappa\Big(\xymatrix{ x \ar[r]^{a_0} & x \ar[r]^{a_1} & \cdots \ar[r]^{a_{n-1}}&x \ar[r]^{a_n} & x}\Big)$$
$$=\xymatrix{ x \ar[r]^{a_0^\dag} & x \ar[r]^{a_1} & \cdots \ar[r]^{a_{n-1}}&x \ar[r]^{a_n} & x}$$
\end{definition}

\begin{proposition}
The cyclic (resp. dihedral, symmetric, Weyl) nerve is a cyclic (resp. dihedral, symmetric, Weyl) set.
\end{proposition}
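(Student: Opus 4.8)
The plan is to reduce everything to the combinatorial criterion of Proposition \ref{gobject}: a simplicial set $X_\bullet$ together with actions $\mathfrak{G}_n \times X_n \to X_n$ is a $\Delta\mathfrak{G}$-set precisely when the actions are genuine group actions of the $\mathfrak{G}_n$ and they satisfy the compatibility relations $d_i(gx) = d_i(g)(d_{g^{-1}(i)}x)$ and $s_i(gx) = s_i(g)(s_{g^{-1}(i)}x)$. Accordingly, for each of the four nerves the verification splits into three tasks: (i) the face and degeneracy maps assemble into a simplicial set and the operators $\tau_n,\omega,\sigma_i,\kappa$ are well defined on it; (ii) these operators generate an action of the correct group $\mathfrak{G}_n$; and (iii) the action is compatible with the simplicial structure in the sense above. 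I would treat the four nerves uniformly, handling one representative relation per generator and noting that the rest follow by the same mechanism.

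For task (i), the face maps are the evident ones — the interior faces $d_i$ compose the adjacent arrows, the extreme faces exploit the loop closure $x_n \to x_0$, and the degeneracies $s_i$ insert an identity $\mathrm{id}_{x_i}$ — so that associativity of composition and the unit laws give the simplicial identities exactly as for the classical nerve, the only new feature being the cyclic closure. The preliminary point to make explicit is well-definedness of the operators as maps $X_n \to X_n$: each operator must carry a composable loop to a composable loop. The cyclic rotation does so automatically; reversing the loop requires the dagger (whence the passage to dagger categories); and the swap-and-dagger of $\sigma_i$ forces all the objects to coincide with a single $x$, which is precisely why the symmetric and Weyl nerves are defined on a one-object diagram.

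For task (ii), I would observe that the required group relations are structurally the same computations already carried out in Proposition \ref{barprop} for the bar constructions, under the dictionary replacing group multiplication by composition in $\mathscr{C}$ and inversion by the dagger $(-)^\dag$. The three dagger axioms $\mathrm{id}^\dag=\mathrm{id}$, $(g\circ f)^\dag = f^\dag\circ g^\dag$ and $f^{\dag\dag}=f$ play exactly the roles that $1^{-1}=1$, $(gh)^{-1}=h^{-1}g^{-1}$ and $(g^{-1})^{-1}=g$ play there, so the verifications of $\omega^2=1$, $(\tau_n\omega)^2=1$ and $(\sigma_1\kappa)^4=1$ transcribe essentially verbatim. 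The one genuine difference is that the symmetric and Weyl $\sigma_i$ additionally dagger the two swapped arrows, unlike the bar-construction swap; but the involutivity $f^{\dag\dag}=f$ makes these extra decorations cancel over the relations $\sigma_i^2=1$ and $(\sigma_i\sigma_{i+1})^3=1$, so the order computations still close.

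Task (iii) is the crux, and I expect it to be the main obstacle. The cyclic relations $d_i\tau_n = \tau_{n-1}d_{i-1}$ and $d_0\tau_n=d_n$ of Example \ref{come} follow by tracking the rotation against the composition, as in Connes' original computation. The new phenomenon is the interaction of the reversing operators $\omega,\sigma_i,\kappa$ with the \emph{composing} face maps: when a face map composes two arrows that an operator has already reversed and daggered, correctness hinges on the contravariance $(a_{i-1}\circ a_i)^\dag = a_i^\dag\circ a_{i-1}^\dag$. I would settle, say, $d_i\omega_n = \omega_{n-1}d_{n-i}$ by checking that reverse-then-face agrees with face-then-reverse once the reindexing $i\mapsto n-i$ is accounted for; the careful bookkeeping of indices around the loop, together with the extreme faces, is the delicate part. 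The degeneracy relations $s_i\omega_n = \omega_{n+1}s_{n-i}$ (and their $\sigma_i,\kappa$ analogues) are easier, since the inserted identities satisfy $\mathrm{id}^\dag=\mathrm{id}$. Once a representative relation is established for each generator, the remaining ones follow from the same contravariance argument together with the group relations already verified in task (ii).
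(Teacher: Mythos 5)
Your proposal is correct and follows essentially the same route as the paper: the paper's proof simply invokes the group-relation computations already carried out for the bar constructions in Proposition \ref{barprop} and observes that the matching of sources and targets is built into the definitions. You are in fact more careful than the paper on two points it leaves implicit --- the compatibility of the operators with the face and degeneracy maps, and the fact that the categorical $\sigma_i$ daggers the swapped arrows whereas the bar-construction $\sigma_i$ does not --- but these elaborations fill in details rather than change the argument.
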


\begin{proof}
We can apply the proof of Theorem \ref{barprop}, which has identical generators.  The only extra data that needs to be proved is the matching of sources and targets of maps, which has been taken care of in the construction.
\end{proof}

\section{Twisted Cyclic and Dihedral Categorical Nerves}\label{sec:twist}

We now consider a twisted cyclic and dihedral version of the bar construction.  In this case we will take only $n$ copies of $G$ in dimension $n$, which is the same as in the case of the classical nerve construction of a category $N\mathscr{C}$. We will again begin by considering how to do such a construction on a group $G$, this was done in the cyclic case by Loday \cite[\S 7.3.3]{loday1998cyclic}.

\begin{definition}[Twisted Cyclic Nerve]
Let $G$ be a group and $z \in G$ a central element.  We construct the \textit{$z$-twisted cyclic nerve of $G$, denoted by $B(G,z,\Delta \mathfrak{C})$} by first constructing the nerve $BG$ and defining the action of the cyclic generator $\tau_n$ on $B_n G$ by:
$$\tau_n(g_1, \dots , g_n) = \left(  z(g_1 g_2 \cdots g_n)^{-1}, g_1, \dots , g_{n-1} 	\right)$$
\end{definition}

\begin{lemma}
 $B(G,z,\Delta \mathfrak{C})$ is a cyclic group.  In particular if $z=1$ we get a canonical cyclic structure on $BG$.
\end{lemma}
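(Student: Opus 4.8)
The underlying simplicial object of $B(G,z,\Delta\mathfrak{C})$ is the usual nerve $BG$, with $B_nG = G^n$ and the standard faces and degeneracies, so by Proposition~\ref{gobject} it only remains to check that the operators $\tau_n$ define a cyclic action. Explicitly, I must verify that $\tau_n^{n+1} = \mathrm{id}$ and that $\tau_n$ satisfies the cyclic compatibility relations of Example~\ref{come}, namely
$$d_i\tau_n = \tau_{n-1}d_{i-1}\ (1\le i\le n), \qquad d_0\tau_n = d_n,$$
$$s_i\tau_n = \tau_{n+1}s_{i-1}\ (1\le i\le n), \qquad s_0\tau_n = \tau_{n+1}^2 s_n.$$

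The plan is to deduce the order condition from the untwisted case by realising $B_n(G,z,\Delta\mathfrak{C})$ as a rotation-invariant fibre inside the cyclic bar construction $\overline{B_n(G)}$. Writing $P = g_1\cdots g_n$, consider the map
$$\Phi_n\colon G^n \longrightarrow S_z^n, \qquad \Phi_n(g_1,\dots,g_n) = (zP^{-1}, g_1,\dots,g_n),$$
where $S_z^n = \{(g_0,\dots,g_n)\in G^{n+1}\mid g_0g_1\cdots g_n = z\}$; this is a bijection, since the constraint determines $g_0$ uniquely. The one essential use of centrality is that $S_z^n$ is invariant under the honest cyclic rotation $\tau$ of $\overline{B_n(G)}$: if $g_0\cdots g_n = z$ then $g_ng_0\cdots g_{n-1} = g_n z g_n^{-1} = z$. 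A short computation, using also $z(zg_n^{-1})^{-1} = g_n$, shows $\Phi_n\circ\tau_n = \tau\circ\Phi_n$. Since $\tau$ has order $n+1$ on $\overline{B_n(G)}$, conjugation through $\Phi_n$ gives $\tau_n^{n+1} = \mathrm{id}$.

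It then remains to verify the compatibility relations, which is a direct calculation on tuples. The relation $d_0\tau_n = d_n$ is immediate, as both sides send $(g_1,\dots,g_n)$ to $(g_1,\dots,g_{n-1})$; the relations that do not disturb the twisted first entry reduce to the corresponding identities for the untwisted nerve, while those acting on the slot $zP^{-1}$ are checked using once more that $z$ is central. Specialising to $z=1$ collapses the twist and recovers the asserted canonical cyclic structure on $BG$.

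The crux of the argument, and the main obstacle, is the order condition $\tau_n^{n+1}=\mathrm{id}$: despite the correction term $zP^{-1}$ introduced at each rotation, the operator returns to the identity after $n+1$ steps. This rests entirely on the invariance of $S_z^n$ under rotation, which would fail for non-central $z$; the remaining combinatorial relations are routine bookkeeping.
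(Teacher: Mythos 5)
Your argument is correct, but it reaches the crucial identity $\tau_n^{n+1}=\mathrm{id}$ by a genuinely different route from the paper. The paper's proof is a direct two-line computation: iterating the formula gives $\tau_n^{n+1}(g_1,\dots,g_n)=(zg_1z^{-1},\dots,zg_nz^{-1})$, which is the identity precisely because $z$ is central. You instead conjugate $\tau_n$ through the bijection $\Phi_n\colon G^n\to S_z^n\subset\overline{B_n(G)}$ onto the honest rotation of the cyclic bar construction and inherit the order $n+1$ from the untwisted case. Note that centrality enters your argument twice --- once for the rotation-invariance of the fibre $S_z^n$ and once in the intertwining identity $z(zg_n^{-1})^{-1}=g_n$ --- so your phrase ``the one essential use of centrality'' slightly undersells what your own ``using also'' concedes; this is a presentational quibble, not a gap. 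What your route buys is conceptual: it exhibits the twisted nerve as the fibre over $z$ of the total-product map on the cyclic nerve $\overline{B(G)}^{\mathfrak{C}}$ of the preceding section, which explains where the correction term $zP^{-1}$ comes from and why centrality is exactly the right hypothesis; what the paper's route buys is brevity. You are also more scrupulous than the paper about the face and degeneracy compatibilities (the paper checks only the order of $\tau_n$); your verifications are right, and since $\Phi_\bullet$ is in fact a simplicial isomorphism onto $S_z^\bullet$, those relations could equally well be transported along $\Phi$ rather than re-verified on tuples.
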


\begin{proof}
To show that this defines a cyclic structure we must show that $\tau_n^{n+1}=id$.  Observe that
$$\tau_n^{n+1}(g_1, \dots , g_n) = (zg_1z^{-1}, \dots , zg_nz^{-1})$$
This is the identity because we have chosen $z$ to be a central element.
\end{proof}

Below we give a twisted nerve construction which works for the dihedral category.

\begin{definition}[Twisted Dihedral Nerve]
Let $G$ be a group and $z \in G$ a central element of order two.  We construct the \textit{$z$-twisted dihedral nerve of $G$, denoted by $B(G,z,\Delta \mathfrak{D})$} by first constructing the nerve $BG$ and defining the action of the cyclic generator $\tau_n$ on $B_n G$ by: 
$$\tau_n(g_1, \dots , g_n) = \left(  z(g_1 g_2 \cdots g_n)^{-1}, g_1, \dots , g_{n-1} 	\right)$$
and the action of the reflexive generator $\omega$ to be:
$$\omega(g_1, \dots , g_n) = \left(  zg_n^{-1}, \dots , zg_1^{-1} 	\right)$$
\end{definition}

\begin{proposition}\label{dtwist}
For a group $G$, and a central element of order two $z \in G$,  the $z$-twisted dihedral nerve is an example of a dihedral set.
\end{proposition}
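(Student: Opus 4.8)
The plan is to invoke Proposition \ref{gobject}: a dihedral set is precisely a simplicial object $X_\bullet$ together with actions of $D_{n+1}$ on each $X_n$ that are compatible with the face and degeneracy maps in the sense recorded there. Since $B(G,z,\Delta\mathfrak{C})$ has already been shown to be a cyclic set, the underlying simplicial structure of $BG$ and the cyclic generator $\tau_n$ are already under control, so I would only need to adjoin the reflexive generator $\omega$ and verify two things: first, that $\tau_n$ and $\omega$ together satisfy the defining relations of $D_{n+1}$, namely $\tau_n^{n+1}=\omega^2=(\tau_n\omega)^2=\mathrm{id}$; and second, that $\omega$ obeys the dihedral face and degeneracy compatibilities of Example \ref{come}.

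Two of the three group relations are quick. The relation $\tau_n^{n+1}=\mathrm{id}$ is exactly the content of the twisted cyclic nerve lemma and needs only that $z$ is central. The relation $\omega^2=\mathrm{id}$ is a direct computation: applying $\omega$ twice sends the $i$-th entry $g_i$ to $z(zg_i^{-1})^{-1}=zg_iz^{-1}=g_i$, where centrality cancels the two copies of $z$; notably this step does not require $z$ to have order two.

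The main obstacle is the relation $(\tau_n\omega)^2=\mathrm{id}$, and this is where the hypothesis $z^2=1$ must be used. I would compute $\tau_n\omega$ explicitly on a tuple $(g_1,\dots,g_n)$: applying $\omega$ first produces the reversed tuple $(zg_n^{-1},\dots,zg_1^{-1})$, and applying $\tau_n$ then requires forming the product of all $n$ of its entries. Because $z$ is central, that product collects $n$ copies of $z$, so the twisting head $z(\,\cdot\,)^{-1}$ of $\tau_n$ carries an accumulated power of $z$; iterating $\tau_n\omega$ a second time, the interior entries should telescope exactly as in the cyclic computation, while the head entry picks up a further such power. The crux of the argument is to show that the total accumulated power of $z$ on the surviving head entry is trivial, and it is precisely here that the order-two condition on $z$ must be brought to bear. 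I expect the bookkeeping of these powers of $z$ produced by the repeated twists to be the genuine difficulty, with centrality used to collect them and $z^2=1$ used to eliminate them.

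Finally, for compatibility with faces and degeneracies I would verify the mixed relations of Example \ref{come} relating $\omega$ to $d_i$ and $s_i$ (for instance $d_i\omega_n=\omega_{n-1}d_{n-i}$) by evaluating both sides on a tuple. Since the face maps of $BG$ multiply adjacent entries and the degeneracies insert identities, these reduce to routine manipulations once the placement of the twisting element $z$ at the two ends of the reversed tuple is tracked carefully; this part is mechanical and I would relegate it to a case check analogous to the one used in Proposition \ref{barprop}.
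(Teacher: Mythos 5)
Your strategy coincides with the paper's: the twisted cyclic lemma disposes of $\tau_n^{n+1}$, a direct computation gives $\omega^2(g_1,\dots,g_n)=(zg_1z^{-1},\dots,zg_nz^{-1})=\mathrm{id}$ by centrality (and you are right that this step does not use $z^2=1$), and the remaining content is the single relation $(\tau_n\omega)^2=\mathrm{id}$. The problem is that you stop exactly there: you set up the bookkeeping of the powers of $z$ and state that you \emph{expect} the accumulated power on the surviving head entry to be trivial, but you never carry the computation out. That computation is the entire proof beyond the already-established cyclic case --- the paper's argument consists precisely of performing it and recording the answer $(\tau_n\omega)^2(g_1,\dots,g_n)=(z^2g_1,g_2,\dots,g_n)$, which dies because $z^2=1$. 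Deferring it as ``the genuine difficulty'' leaves the proposition unproved.

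The deferral is not harmless, because the bookkeeping does not come out the way you expect if one uses the formula for $\omega$ exactly as displayed. With $\omega(g_1,\dots,g_n)=(zg_n^{-1},\dots,zg_1^{-1})$, the product of the entries of $\omega(g)$ is $z^n(g_1\cdots g_n)^{-1}$, so $\tau_n\omega(g)=(z^{1-n}g_1\cdots g_n,\,zg_n^{-1},\dots,zg_2^{-1})$, and iterating gives $(\tau_n\omega)^2(g)=(z^{1-n}g_1,g_2,\dots,g_n)$: the accumulated power on the head entry is $z^{1-n}$, which $z^2=1$ kills only when $n$ is odd (for $n=2$ one gets $(zg_1,g_2)\neq(g_1,g_2)$ whenever $z\neq 1$). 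The paper's stated answer $(z^2g_1,g_2,\dots,g_n)$ is what one obtains from the reflection $(g_1,\dots,g_n)\mapsto(g_n^{-1},\dots,g_1^{-1})$ \emph{without} the factors of $z$, for which $\omega^2=\mathrm{id}$ is immediate. So the step you postponed is exactly the one that is sensitive to the placement of the $z$'s and cannot be taken on faith: a complete proof must actually perform the computation and, in doing so, use a reflection for which the powers of $z$ genuinely cancel for all $n$. Your additional plan to check the face and degeneracy compatibilities of Example \ref{come} is reasonable (the paper omits this), but it does not compensate for the missing central computation.
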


\begin{proof}
The $z$-twisted cyclic nerve already gives us a partial proof with the generator $\tau_n$.  Therefore we need only show that the generator $\omega$ follows the group laws.
$$\omega^2(g_1, \dots, g_n) = (z g_1 z^{-1}, \dots , z g_n z^{-1}) =   \text{id by centrality of } z .$$
$$(\tau_n \omega)^2 = (z^2 g_1, g_2, \dots , g_n) = \text{id by the fact } z \text{ has order } 2.$$
\end{proof}

As we did in the previous section, it would be nice to extend this to a categorical nerve setting.  If we do not twist by any elements, it is possible to construct the cyclic and dihedral twisted categorical nerves whenever the category can be endowed with a dagger structure with the additional property:
$$(\ast)\colon f^\dag f \cong  id_A.$$
That is, $f^\dag$ acts up to isomorphism like an inverse.  This can be worded as we require all morphisms in $\mathscr{C}$ to be unitary.  An example of such a category would be any groupoid.

\begin{definition}[Twisted Cyclic Categorical Nerve]
Let $\mathscr{C}$ be a dagger category along with property $(\ast)$.  Its \textit{twisted cyclic nerve} $\widetilde{N \mathscr{C}^{\mathfrak{C}}}$ is defined to be the simplicial object such that in degree $n$ we have the $n$ maps in a diagram of the form:
$$\xymatrix{x_0 \ar[r]^{a_1} & x_1 \ar[r]^{a_1} & \cdots \ar[r]^{a_{n}}& x_n }$$
with the cyclic operator $\tau_{n}$ being defined as follows:
$$\xymatrixcolsep{3pc}\xymatrix{x_n \ar[r]^{(a_1 \cdots a_n)^\dag} & x_0 \ar[r]^{a_1} \ar[r] & \cdots \ar[r]^-{a_{n-1}} & x_{n-1}}$$
\end{definition}

We can define in an analogous way the twisted dihedral nerve, $\widetilde{N \mathscr{C}^{\mathfrak{D}}}$, of a category with the same properties by adding in the reflexive action of $\omega$.

\begin{theorem}
Let $\mathscr{C}$ be a dagger category with property $(\ast)$, then its twisted cyclic (resp. dihedral) nerve is a cyclic (resp. dihedral) set.  
\end{theorem}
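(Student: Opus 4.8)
The plan is to invoke Proposition~\ref{gobject}, which tells us that a cyclic (resp. dihedral) set is nothing more than a simplicial set equipped with a $C_{n+1}$- (resp. $D_{n+1}$-) action in each degree satisfying the prescribed face and degeneracy relations. Thus the work splits into two parts: checking that the operators $\tau_n$ (and $\omega$) are well defined and generate the correct group in each degree, and checking their compatibility with the $d_i$ and $s_i$. The key observation driving the whole argument is that the formulas for $\tau_n$ and $\omega$ on $\widetilde{N\mathscr{C}^{\mathfrak{C}}}$ are term-for-term the formulas of the twisted nerve of a group from Section~\ref{sec:twist}, with the central element $z$ set equal to the identity and group inversion $g^{-1}$ replaced by the dagger $a^\dag$; so, just as in the proof of Proposition~\ref{barprop} for the untwisted nerves, the only genuinely new content is the bookkeeping of sources and targets, and the verification that property $(\ast)$ plays the role formerly played by inversion.

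First I would confirm that $\widetilde{N\mathscr{C}^{\mathfrak{C}}}$ is a simplicial set under the standard nerve structure (the $d_i$ compose adjacent arrows or drop an end arrow, the $s_i$ insert identities). Then I would verify well-definedness of $\tau_n$: for a composable string $x_0 \to \cdots \to x_n$, contravariance of $\dag$ makes $(a_1\cdots a_n)^\dag$ a morphism $x_n \to x_0$, so the rotated string $x_n \to x_0 \to \cdots \to x_{n-1}$ is again composable, and the tuple of objects $(x_0,\dots,x_n)$ is merely cyclically permuted, hence restored after $n+1$ applications. The core step is then $\tau_n^{\,n+1} = \mathrm{id}$ on the morphism data. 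Here property $(\ast)$, read as strict unitarity $a^\dag a = \mathrm{id}$ (equivalently $a^\dag = a^{-1}$), is indispensable: it makes the formula for $\tau_n$ literally the group formula with $z = \mathrm{id}$, so the telescoping computation of the corresponding group-level lemma in Section~\ref{sec:twist} — in which the leading adjoint entry $(a_1\cdots a_n)^\dag$ unwinds under repeated rotation and each $a_i$ is returned to its original slot — transfers verbatim. Without $(\ast)$ the leading entry would fail to cancel and one would obtain $\tau_n^{\,n+1} = \mathrm{id}$ only up to the isomorphisms of $(\ast)$.

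Finally I would check the crossed simplicial relations of Example~\ref{come} between $\tau_n$ and the maps $d_i, s_i$; these reduce to the relations already known for the untwisted cyclic and bar constructions, since faces and degeneracies still act by composition and identity-insertion, the new ingredient being only that the adjoints appearing inside $\tau_n$ have matching endpoints at each stage. For the dihedral statement I would add the operator $\omega$, sending $(a_1,\dots,a_n)$ to the reversed string of adjoints $(a_n^\dag,\dots,a_1^\dag)$, and verify $\omega^2 = \mathrm{id}$ (immediate from $\dag\dag = \mathrm{id}$) and $(\tau_n\omega)^2 = \mathrm{id}$ together with the $\omega$--face/degeneracy relations, each reducing to the dihedral computation of Proposition~\ref{dtwist} with $z = \mathrm{id}$. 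The main obstacle I anticipate is precisely this endpoint bookkeeping: one must confirm at every intermediate composite that the arrows inside the adjoints are morphisms between the correct objects, so that the purely algebraic identities established in the group case lift without change to the categorical setting.
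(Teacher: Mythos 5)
Your proposal is correct and follows essentially the same route as the paper: the decisive step in both is that property $(\ast)$ makes the leading entry $(a_1\cdots a_n)^\dag$ telescope under repeated rotation so that each $a_i$ returns to its original slot after $n+1$ applications of $\tau_n$, with the reflexive generator and the face/degeneracy relations reducing to the already-established untwisted and group-level cases. Your explicit remark that $(\ast)$ must be read strictly (else $\tau_n^{n+1}=\mathrm{id}$ only up to isomorphism) is a point the paper glosses over, but it does not change the argument.
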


\begin{proof}
We need only prove that the cyclic generator has the required properties as the reflexive generator is identical to the non-twisted case.  We begin by noting that:
$$\tau_n^2 (\xymatrix{x_0 \ar[r]^{a_1} & x_1 \ar[r]^{a_1} & \cdots \ar[r]^{a_{n}}& x_n })$$
$$=\xymatrixcolsep{3.5pc}\xymatrix{x_{n-1} \ar[rr]^{\left((a_1 \cdots a_n)^\dag a_1 \cdots a_{n-1}\right)^\dag} & \hspace{0.5cm} & x_n \ar[r]^{(a_1 \cdots a_n)^\dag}  & x_0 \ar[r] & \cdots \ar[r]^{a_{n-2}} & x_{n-2}}$$
We see that 
$$\left((a_1 \cdots a_n)^\dag a_1 \cdots a_{n-1}\right)^\dag = (a_n^\dag \cdots a_1^\dag a_1 \cdots a_{n-1})^\dag$$
and by property $(\ast)$, this becomes $(a_n^\dag)^\dag = a_n$.  For this map to get back to its original position it must be shifted $n-1$ times, for a total of $n-1+2=n+1$ applications of $\tau_n$.  Generalising this ideal to the other maps, it is clear that $\tau_n^{n+1}=\text{id}$ as required.
\end{proof}

\begin{corollary}\label{nataction}
Let $\mathscr{C}$ be a dagger category with property $(\ast)$, then its categorical nerve $N\mathscr{C}$ has a natural cyclic and dihedral structure.  In particular, the nerve of a groupoid has a natural cyclic and dihedral structure.
\end{corollary}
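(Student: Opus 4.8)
The plan is to derive this directly from the preceding theorem by observing that the twisted cyclic (resp. dihedral) nerve was constructed precisely so that its underlying simplicial object is the classical categorical nerve. First I would make this identification explicit: by construction, in degree $n$ the twisted cyclic nerve $\widetilde{N\mathscr{C}^{\mathfrak{C}}}$ consists of $n$-tuples of composable morphisms $(a_1, \dots, a_n)$, which is exactly the data of $N_n \mathscr{C}$, and its face and degeneracy maps coincide with the standard simplicial structure maps of the nerve (composition of adjacent arrows and insertion of identities). Thus the functor that forgets the cyclic and reflexive operators carries $\widetilde{N\mathscr{C}^{\mathfrak{C}}}$ and $\widetilde{N\mathscr{C}^{\mathfrak{D}}}$ to the classical nerve $N\mathscr{C}$.

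Next I would invoke the previous theorem, which establishes that $\widetilde{N\mathscr{C}^{\mathfrak{C}}}$ (resp. $\widetilde{N\mathscr{C}^{\mathfrak{D}}}$) is a cyclic (resp. dihedral) set. By Proposition \ref{gobject}, this is the statement that the operator $\tau_n$ (together with $\omega$ in the dihedral case), acting on the underlying simplicial object, satisfies all the required group relations and the face and degeneracy compatibility relations. Since the underlying simplicial object is precisely $N\mathscr{C}$, these operators endow $N\mathscr{C}$ itself with a cyclic (resp. dihedral) structure, which proves the first assertion.

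For the naturality claim I would note that the cyclic and reflexive operators are defined using only composition, the dagger functor, and property $(\ast)$, all of which are preserved by any structure-preserving dagger functor; hence such a functor induces a morphism of cyclic (resp. dihedral) sets, so the assigned structure is natural with respect to dagger functors. For the groupoid case, every groupoid is a dagger category with $f^\dag := f^{-1}$, and property $(\ast)$ holds on the nose since $f^\dag f = \mathrm{id}_A$; the first part then applies verbatim to give the desired cyclic and dihedral structure on the nerve of a groupoid.

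The only real obstacle here is conceptual rather than computational: one must recognise that the construction of the twisted nerves in this section was engineered exactly so that discarding the extra operators recovers the ordinary nerve, so that all the combinatorial verification has already been absorbed into the preceding theorem. Once this identification of underlying simplicial objects is spelled out, the corollary requires no further calculation.
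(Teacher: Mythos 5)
Your proposal is correct and follows essentially the same route as the paper, which states this corollary without further proof as an immediate consequence of the preceding theorem: the underlying simplicial object of the twisted cyclic (resp.\ dihedral) nerve is by construction the classical nerve $N\mathscr{C}$, so the theorem's verification of the cyclic (resp.\ dihedral) relations transfers directly, and the groupoid case follows since $f^\dag = f^{-1}$ satisfies $(\ast)$. Your additional remarks on naturality and the identification of face/degeneracy maps only make explicit what the paper leaves implicit.
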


\section{Equivariant Derived Moduli}

We now arrive at the second portion of this paper, which deals with applying the categorical nerves that we have developed to the theory of derived algebraic geometry.  Due to Corollary \ref{nataction}, and the conditions required on the categories for the symmetric (resp. Weyl) nerve, we will only consider the cyclic and dihedral nerves in this section as they will work in full generality.  We will not discuss the full technicalities of (derived)-stacks, but instead direct the interested reader to \cite{MR2778590} for a readable overview, or \cite{MR2137288,MR2394633} for the formal theory.

\begin{definition}
A \emph{stack} is a (lax 2-)functor $\textbf{Aff}^{op}_\tau \to \textbf{Grpd}$ from the opposite (2-)category of affine schemes to the (2-)category of groupoids satisfying descent with respect to the Grothendieck topology $\tau$ (see \cite{MR1771927}).  We will denote the category of stacks as $\text{Stk}(\textbf{Aff})$.
\end{definition}

Stacks were introduced as solutions to certain moduli problems.  However, as soon as you want to classify things up to some weaker notion of equivalence, they are not sufficient.  As early as the work of Grothendieck (see \cite{pursuing}), it was realised that one needs to extend the target category to something ``smooth''.  With the homotopification program, the correct category was found, namely $\textbf{sSet}$.  

\begin{definition}
A \emph{higher stack}  is a functor $\textbf{Aff}^{op}_\tau \to \textbf{sSet}$ satisfying hyperdescent with respect to $\tau$.  The category of higher stacks will be denoted $\textbf{Stk}(\textbf{Aff})$.
\end{definition}

Given a stack $\mathscr{X} \in \text{Stk}(\textbf{Aff})$ it is possible to construct a higher stack $N \mathscr{X} \in \textbf{Stk}(\textbf{Aff})$ by taking the nerve of each groupoid $\mathscr{X}(A)$, $A \in \textbf{Aff}$.  Furthermore, one can ``smooth'' the source category $\textbf{Aff}^{op}$ to $\textbf{dAff}^{op} := \textbf{sComm}$ the (homotopy-)category of derived affine schemes, where $\textbf{sComm}$ is the category of simplicial commutative rings.  

\begin{definition}
A \emph{derived stack} is a functor $\textbf{dAff}^{op}_\tau \to \textbf{sSet}$ satisfying hyperdescent with respect to $\tau$.  The category of derived stacks will be denoted $\textbf{Stk}(\textbf{dAff})$.
\end{definition}

Given a higher stack $\mathscr{X} \in \textbf{Stk}(\textbf{Aff})$, there is an inclusion object $j(\mathscr{X}) \in \textbf{Stk}(\textbf{dAff})$, (after taking a suitable fibrant replacement), induced by the inclusion $j_0\colon \textbf{Aff} \to \textbf{dAff}$.  In particular, combining the above ideas, given any stack $\mathscr{X}$, we can construct a derived stack $j(N \mathscr{X})$.  These ideas can be summed up using the following diagrams:
$$\xymatrixcolsep{6pc}\xymatrixrowsep{4pc}\xymatrix{\textbf{Aff}^{op} \ar[r]^{\text{Stacks}} \ar[dr]|{\text{Higher Stacks}} \ar[d]_{j_0} & \textbf{Grpd} \ar[d]^{N} & \text{Stk}(\textbf{Aff}) \ar[d]_N \ar[dr]^{j \circ N}\\
\textbf{dAff}^{op} \ar[r]_{\text{Derived Stacks}}& \textbf{sSet} & \textbf{Stk}(\textbf{Aff}) \ar[r]_j & \textbf{Stk}(\textbf{dAff})}$$

Our intended application is now immediate, for a simple crossed simplicial group $\Delta \mathfrak{G}$, we wish to construct examples of $\Delta \mathfrak{G}$-derived stacks:

\begin{definition}
A \emph{$\Delta \mathfrak{G}$-derived stack} is a functor $\textbf{dAff}^{op}_\tau \to \Delta \mathfrak{G} \textbf{-Set}$ satisfying equivariant hyperdescent with respect to $\tau$ (which can be made exact using Quillen model structures as done in \cite{balchin2}).  The category of $\Delta \mathfrak{G}$-derived stacks will be denoted $\textbf{Stk}^\mathfrak{G}(\textbf{dAff})$.  A similar definition holds for the notion of a $\Delta \mathfrak{G}$-higher stack.
\end{definition}

A whole range of example of $\Delta \mathfrak{G}$-derived stacks can be obtained by just using the nerve constructions.  Take a stack $\mathscr{X}$, and instead of taking the nerve, take the $\Delta \mathfrak{G}$-nerve (or twisted nerve where appropriate) to get a $\Delta \mathfrak{G}$-higher stack.  We can then take a fibrant replacement for the inclusion into the category of $\Delta \mathfrak{G}$-derived stacks.  We will again denote this inclusion functor $j$.

For the remainder of the paper we will only consider the twisted cyclic (resp. dihedral nerve), the reason being is that it renders the following diagram commutative:
$$\xymatrix{\textbf{Grpd} \ar[r]^N \ar[dr]_{\widetilde{N^\mathfrak{C}}}& \textbf{sSet} \\
& \Delta \mathfrak{C} \textbf{-Set} \ar[u]_{i^\ast}}$$
where $i^\ast$ forgets the cyclic action.  Therefore by using the twisted nerve it will be easier to compare the objects that we get with the non-equivariant case.  Of course it would be of interest to consider what happens in the non-twisted nerves also.  This construction allows us to extend the above diagram to the following (in the case of the cyclic twisted nerve):
$$\xymatrixcolsep{6pc}\xymatrixrowsep{2pc}\xymatrix{\textbf{Aff}^{op} \ar[r]^{\text{Stacks}} \ar[ddr]|{\text{Higher Stacks}} \ar[dd]_{j_0} & \textbf{Grpd} \ar[dd]^{N}  \ar@/^4pc/[dddd]_{\widetilde{N^\mathfrak{C}}}& \\
&& \text{Stk}(\textbf{Aff}) \ar[dd]_{\widetilde{N_\mathfrak{C}}} \ar[ddr]^{j \circ \widetilde{N^\mathfrak{C}}} \\
\textbf{dAff}^{op} \ar[r]_{\text{Derived Stacks}} \ar[ddr]|{\Delta \mathfrak{C} \text{-Derived Stacks}}& \textbf{sSet} & \\
&& \textbf{Stk}^\mathfrak{C}(\textbf{Aff}) \ar[r]_j& \textbf{Stk}^\mathfrak{C}(\textbf{dAff})\\
& \Delta \mathfrak{C}\textbf{-Set} \ar[uu]_{i^\ast}	&}$$

\section{$S^1$-Equivariant Derived Local Systems}

We now use the ideas from the previous section to construct the moduli of equivariant derived local systems on spaces with $S^1$-action.  To do this, we first need to introduce the derived stack of local systems. 

\begin{definition}
Let $G$ be an algebraic group defined over a field $k$.  The \emph{classifying stack} $\mathscr{B}G$ assigns to a scheme $U$ the groupoid whose objects are principal $G$-bundles $\pi\colon \mathcal{E} \to U$, and the morphisms being isomorphisms of principal $G$-bundles.  We will simplify notation and write $\mathscr{B}G$ for $j(N \mathscr{B}G)$, the corresponding derived stack.
\end{definition}

\begin{definition}
Let $\mathscr{B}G$ be the derived classifying stack of an algebraic group and $X$ a topological space.  The \emph{derived stack of $G$-local systems on $X$} is the stack
\begin{align*}
\mathbb{R}\textbf{Loc}(X,G)\colon \textbf{dAff}^{op} &\to \textbf{sSet}\\
U &\mapsto \text{Map}(X, |\mathscr{B}G(U)|)
\end{align*}
That is, $\mathbb{R}\textbf{Loc}(X,G)(U)$ is the simplicial set of continuous maps from the space $X$ to the simplicial set $\mathscr{B}G(U)$. 
\end{definition}

To be able to discuss the cyclic version of this stack, we need the correct analogue of the realisation functor.  We will denote by $\textbf{Top}^{S^1}$ the category of topological spaces with an $S^1$-action.

\begin{proposition}[{\cite[Proposition 2.8]{homotopycc}}]\label{index:cycreal}
There exists a cyclic realisation functor $|-|_{\mathfrak{C}}\colon \Delta \mathfrak{C} \textbf{-Set} \to \textbf{Top}^{S^1}$ such that the following diagram commutes up to a natural isomorphism:
$$\xymatrix{& \textbf{Top}^{S^1}   \ar[d]^u \\
\Delta \mathfrak{C} \textbf{-Set} \ar[ur]^{|-|_{\mathfrak{C}}}  \ar[r]_{|i^\ast -|} & \textbf{Top} }$$
where $u$ is the forgetful functor which forgets the circle action, and $|i^\ast -|$ is the realisation of the underlying simplicial set.
\end{proposition}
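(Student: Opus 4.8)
The plan is to arrange matters so that the triangle commutes essentially by construction: I would define $|X|_{\mathfrak{C}}$ to have underlying topological space exactly the ordinary realization $|i^\ast X|$ of the underlying simplicial set, and then equip this space with a natural $S^1$-action. With this setup the comparison $u \circ |-|_{\mathfrak{C}} \cong |i^\ast -|$ holds via the identity natural transformation, so the entire content of the statement lies in producing the circle action and verifying its naturality.

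There are two equivalent ways to build the action, and I would carry both in parallel. The structural route uses a coend. Writing $\Lambda[n] = \mathrm{Hom}_{\Delta\mathfrak{C}}(-,[n])$ for the standard free cyclic set, the density (co-Yoneda) theorem gives $X \cong \int^{[n] \in \Delta\mathfrak{C}} X_n \cdot \Lambda[n]$ in $\Delta\mathfrak{C}\textbf{-Set}$; applying the cocontinuous functor $|i^\ast(-)|$ yields a natural homeomorphism $|i^\ast X| \cong \int^{[n]} X_n \cdot |i^\ast \Lambda[n]|$. A direct calculation identifies $|i^\ast \Lambda[n]| \cong \Delta^n \times S^1$ with $S^1$ acting by rotation on the second factor; for $n=0$ this is precisely the standard cyclic model of the circle, since $\Lambda[0]_m$ has $m+1$ elements by the canonical decomposition. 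Because the morphisms of $\Delta\mathfrak{C}$ act by $S^1$-equivariant maps on these models, the coend inherits an $S^1$-action, and this defines $|X|_{\mathfrak{C}}$. The concrete route describes the same action through Connes' ``circle of beads'': a point $[x;(t_0,\dots,t_n)]$ of $|i^\ast X|$ is read as the circle $\mathbb{R}/\mathbb{Z}$ subdivided into arcs of lengths $t_0,\dots,t_n$ labelled by $x \in X_n$, and $s \in S^1$ rotates this circle, cyclically permuting the $t_i$ and replacing $x$ by $\tau_n x$ each time the basepoint crosses a marked point.

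With the action in hand I would verify three things in order. First, that the prescription descends to the quotient defining $|i^\ast X|$: this is exactly where the mixed relations $\tau_n\delta_i = \delta_{i-1}\tau_{i-1}$ and $\tau_n\delta_0 = \delta_n$, together with their degeneracy analogues, are used, since they guarantee that crossing a marked point is compatible with the face and degeneracy gluings of adjacent simplices. Second, that this is a genuine continuous action of $S^1$: additivity reduces to the bookkeeping that crossing $j$ then $k$ marked points applies $\tau_n^{j+k}$, and the closure condition -- that one full rotation by $1 \in \mathbb{R}/\mathbb{Z}$ returns to the start -- is precisely the relation $\tau_n^{n+1} = \mathrm{id}$. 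Third, naturality: a cyclic map $f\colon X \to Y$ commutes with each $\tau_n$, hence with bead-crossing, so $|i^\ast f|$ is $S^1$-equivariant and $|-|_{\mathfrak{C}}$ is a functor into $\textbf{Top}^{S^1}$.

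I expect the main obstacle to be continuity and well-definedness at the degenerate configurations -- points where some $t_i \to 0$, or where the basepoint meets a marked point -- because there the combinatorial label is forced to jump by a power of $\tau_n$ while the underlying point of $|i^\ast X|$ varies continuously, and one must check the two one-sided descriptions of the action agree on the overlap. This is where the coend model earns its keep: phrasing the action through the equivariant homeomorphisms $|i^\ast\Lambda[n]| \cong \Delta^n \times S^1$ makes continuity manifest and reduces the whole compatibility to the functoriality of $[n] \mapsto |i^\ast\Lambda[n]|$ on $\Delta\mathfrak{C}$, bypassing the piecewise analysis entirely.
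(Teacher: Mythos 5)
The paper does not actually prove this proposition: it is imported verbatim as a citation (\cite[Proposition 2.8]{homotopycc}), so there is no in-paper argument to compare yours against. On its own merits, your construction is the standard one from the literature (Connes, Burghelea--Fiedorowicz, Loday \S 7.1): take the underlying space to be $|i^\ast X|$ so the triangle commutes on the nose, and obtain the circle action from the coend formula together with the natural $S^1$-equivariant homeomorphism $|i^\ast\Lambda[n]|\cong\Delta^n\times S^1$. The outline is sound, and you correctly locate where each cyclic relation is used and where the delicate points lie (continuity at degenerate configurations, which the coend formulation absorbs). The one step you assert rather than establish is the crux: that $[n]\mapsto\Delta^n\times S^1$ is functorial on all of $\Delta\mathfrak{C}$ with \emph{every} structure map $S^1$-equivariant --- in particular one must write down what $\tau_n$, the faces and the degeneracies do to $\Delta^n\times S^1$ and check equivariance explicitly; this is a genuine computation, not a formality, and a complete proof would include it. Two minor cautions: $i^\ast$ preserves colimits because colimits of presheaves are computed objectwise (worth saying, since $i^\ast$ is a restriction, not a left Kan extension), and the relation in your second bullet should read $\tau_n\delta_i=\delta_{i-1}\tau_{n-1}$. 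With the key lemma filled in, this would be an acceptable self-contained proof of the cited result.
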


\begin{definition}
Let $\mathscr{B}G^\mathfrak{C} := j(\widetilde{N\mathscr{B}G^\mathfrak{C}})$ be the cyclic derived classifying stack of an algebraic group and $X$ a topological space with an action of $S^1$.  The \emph{$S^1$-equivariant derived stack of local systems} is the stack
\begin{align*}
\mathbb{R}\textbf{Loc}^\mathfrak{C}(X,G)\colon \textbf{dAff}^{op} &\to \Delta \mathfrak{C} \textbf{-Set}\\
U &\mapsto \text{Map}^{S^1}(X, |\mathscr{B}G^\mathfrak{C}(U)|_\mathfrak{C})
\end{align*}
That is, $\mathbb{R}\textbf{Loc}^\mathfrak{C}(X,G)(U)$ is the cyclic set of continuous maps in $\textbf{Top}^{S^1}$ from the space $X$ to the space $|\mathscr{B}G^\mathfrak{C}(U)|_\mathfrak{C}$. 
\end{definition}

\begin{remark}
We can adjust the above theory for the twisted dihedral nerve by using the fact that there is a pair of adjoint functors $|-|_\mathfrak{D}\colon \Delta \mathfrak{D} \textbf{-Set} \rightleftarrows \textbf{Top}^{O(2)}\colon S_\mathfrak{D}(-)$ between the categories of dihedral sets and topological spaces with $O(2)$-action.
\end{remark}

The following theorem explains our choice of terminology, the fact the above construction really is doing something equivariant.

\begin{theorem}\label{thrm:equi}
Let $X \in \textbf{Top}^{S^1}$ be a topological space with an action of $S^1$.  Denote by $X/S^1$ the orbit space of $X$, i.e., the space obtained by identifying points of $X$ in the same orbit. Then
$$\mathbb{R}\textbf{Loc}^\mathfrak{C}(X,G) \simeq \mathbb{R}\textbf{Loc}(X/S^1,G).$$
\end{theorem}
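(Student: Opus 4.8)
The plan is to reduce the statement to a single geometric fact — that the circle acts trivially on the cyclic realisation $|\mathscr{B}G^{\mathfrak{C}}(U)|_{\mathfrak{C}}$ for each $U \in \textbf{dAff}$ — after which the theorem follows formally. First I would unwind both sides pointwise: by definition $\mathbb{R}\textbf{Loc}^{\mathfrak{C}}(X,G)(U) = \text{Map}^{S^1}(X, |\mathscr{B}G^{\mathfrak{C}}(U)|_{\mathfrak{C}})$ while $\mathbb{R}\textbf{Loc}(X/S^1,G)(U) = \text{Map}(X/S^1, |\mathscr{B}G(U)|)$. Every construction in sight ($\mathscr{B}G(-)$, its twisted cyclic nerve, the cyclic realisation $|-|_{\mathfrak{C}}$, and the mapping objects) is natural in $U$, so it suffices to produce the equivalence for a fixed $U$; naturality is then automatic. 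The formal input is the adjunction $(-)/S^1 \dashv \text{triv}$ between $\textbf{Top}^{S^1}$ and $\textbf{Top}$: if a space $Y$ carries the \emph{trivial} $S^1$-action then an equivariant map $X \to Y$ is exactly a map that is constant on orbits, whence $\text{Map}^{S^1}(X,Y) = \text{Map}(X/S^1, Y)$. Thus the whole theorem reduces to identifying $|\mathscr{B}G^{\mathfrak{C}}(U)|_{\mathfrak{C}}$ with $|\mathscr{B}G(U)|$ carrying the trivial action.

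Next I would pin down the underlying space. The commuting triangle $i^{\ast}\widetilde{N^{\mathfrak{C}}} = N$ for groupoids, applied to $\mathscr{B}G(U)$, shows that the underlying simplicial set of the twisted cyclic nerve $\mathscr{B}G^{\mathfrak{C}}(U)$ is precisely the ordinary nerve $N\mathscr{B}G(U)$. Feeding this into Proposition \ref{index:cycreal}, whose content is the natural isomorphism $u \circ |-|_{\mathfrak{C}} \cong |i^{\ast}(-)|$, produces a natural homeomorphism between the space underlying $|\mathscr{B}G^{\mathfrak{C}}(U)|_{\mathfrak{C}}$ and $|N\mathscr{B}G(U)| = |\mathscr{B}G(U)|$. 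So the only remaining question is whether the additional $S^1$-action supplied by the cyclic structure is trivial.

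The heart of the argument — and the step I expect to be the main obstacle — is proving that this $S^1$-action is trivial. The mechanism is the $z=1$ twisting built into the cyclic operator, which sends $(a_1,\dots,a_n)$ to $\bigl((a_1\cdots a_n)^{\dag}, a_1, \dots, a_{n-1}\bigr)$, forcing the wrap-around arrow to be the inverse of the composite of the rest. I would exploit the comparison with the \emph{untwisted} cyclic nerve, whose realisation is the free loop space $L|\mathscr{B}G(U)|$ with the loop-rotation action: the twisted nerve realises to the constant loops, that is to the $S^1$-fixed subspace $\bigl(L|\mathscr{B}G(U)|\bigr)^{S^1} \cong |\mathscr{B}G(U)|$, on which rotation restricts to the trivial action. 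Making this precise means tracking Connes' $S^1$-action (as constructed in \cite{homotopycc} and packaged in Proposition \ref{index:cycreal}) through the twisted construction and verifying that the resulting action map $S^1 \times |\mathscr{B}G(U)| \to |\mathscr{B}G(U)|$ is equivariantly homotopic to the projection. This is where the genuine work lies, since the triviality is invisible at the level of the individual finite groups $\mathfrak{C}_n$ acting on the $n$-simplices — there $\tau_n$ is a nontrivial permutation — and emerges only after realisation; an alternative, more hands-on route would be to produce directly a simplicial (cyclic) homotopy exhibiting each $\tau_n$ as homotopic to the identity coherently in $n$.

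Finally I would reassemble. Granting triviality, the chain
$$\text{Map}^{S^1}(X, |\mathscr{B}G^{\mathfrak{C}}(U)|_{\mathfrak{C}}) = \text{Map}^{S^1}\bigl(X, |\mathscr{B}G(U)|_{\text{triv}}\bigr) = \text{Map}(X/S^1, |\mathscr{B}G(U)|) = \mathbb{R}\textbf{Loc}(X/S^1,G)(U)$$
holds, where the middle equality is the adjunction of the first paragraph and the left-hand side is $\mathbb{R}\textbf{Loc}^{\mathfrak{C}}(X,G)(U)$ after forgetting the cyclic structure along $i^{\ast}$. Since every step is natural in $U$, these pointwise equivalences assemble into the asserted equivalence $\mathbb{R}\textbf{Loc}^{\mathfrak{C}}(X,G) \simeq \mathbb{R}\textbf{Loc}(X/S^1,G)$ of derived stacks.
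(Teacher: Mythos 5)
Your proposal follows essentially the same route as the paper: both reduce the theorem to the triviality of the $S^1$-action on $|\mathscr{B}G^{\mathfrak{C}}(U)|_{\mathfrak{C}}$ (whose underlying space is $|\mathscr{B}G(U)|$ by the compatibility of cyclic realisation with the forgetful functor) and then conclude via the identification $\text{Map}^{S^1}(X, Y_{\text{triv}}) \cong \text{Map}(X/S^1, Y)$. The one step you flag as the remaining obstacle is precisely the step the paper discharges by citation rather than by your free-loop-space argument: Loday's result that the cyclic realisation of the identity-twisted cyclic nerve of a group carries the trivial $S^1$-action, combined with the observation that every groupoid is equivalent to a disjoint union of groups, so the triviality transfers to $\mathscr{B}G(U)$.
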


\begin{proof}
We can prove this by looking at each element $\text{Map}^{S^1}(X, |\mathscr{B}G^\mathfrak{C}(U)|_\mathfrak{C})$.  First of all we use a result from Loday \cite[\S 7.3.5]{loday1998cyclic} which states that the cyclic realisation of the twisted nerve construction of a group $G$ has trivial $S^1$-action when twisting by the identity element.  As every groupoid is equivalent to the disjoint union of groups, we can conclude that the action of $S^1$ on $|\mathscr{B}G^\mathfrak{C}(U)|_\mathfrak{C}$ is also trivial.  Due to the action being trivial, a general result about $S^1$-spaces, such as in \cite[\S 1.1]{MR1413302}, allows us to move from mapping spaces in $\textbf{Top}^{S^1}$ to $\textbf{Top}$ in the following manner:
$$\text{Map}^{S^1}(X, |\mathscr{B}G^\mathfrak{C}(U)|_\mathfrak{C}) \simeq \text{Map}(X/S^1,|\mathscr{B}G(U)|).$$
The result then follows from this observation.
\end{proof}

\begin{corollary}
If $X \in \textbf{Top}^{S^1}$ has trivial $S^1$-action then
$$\mathbb{R}\textbf{Loc}^\mathfrak{C}(X,G) \simeq \mathbb{R}\textbf{Loc}(X,G).$$
\end{corollary}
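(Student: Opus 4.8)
The plan is to derive this directly from Theorem \ref{thrm:equi}, which already computes $\mathbb{R}\textbf{Loc}^\mathfrak{C}(X,G)$ in terms of the orbit space. The only additional input needed is the elementary observation that when the $S^1$-action on $X$ is trivial, the orbit space $X/S^1$ is canonically homeomorphic to $X$ itself, so that passing to orbits does nothing.

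First I would unwind the definition of the orbit space. Since the action is trivial, every orbit $S^1 \cdot x = \{x\}$ is a singleton, so the quotient map $q \colon X \to X/S^1$ is a continuous bijection. Equipping $X/S^1$ with the quotient topology, a subset is open precisely when its preimage under $q$ is open; as $q$ is a bijection, this forces $q$ to be a homeomorphism, and hence $X/S^1 \cong X$ as topological spaces.

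Then I would invoke the functoriality of $\mathbb{R}\textbf{Loc}(-,G)$ in its space argument: applying it to the homeomorphism $X/S^1 \cong X$ yields an equivalence
$$\mathbb{R}\textbf{Loc}(X/S^1, G) \simeq \mathbb{R}\textbf{Loc}(X, G).$$
Composing this with the equivalence $\mathbb{R}\textbf{Loc}^\mathfrak{C}(X,G) \simeq \mathbb{R}\textbf{Loc}(X/S^1,G)$ supplied by Theorem \ref{thrm:equi} gives the claim. I do not expect any genuine obstacle here: the substantive content is entirely contained in Theorem \ref{thrm:equi} (in particular in the triviality of the $S^1$-action on the realisation, via Loday's computation), and this corollary is merely the specialisation to the degenerate case in which the quotient by the trivial action is an isomorphism.
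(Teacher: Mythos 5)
Your argument is correct and matches the paper's intent exactly: the corollary is stated without proof precisely because it follows immediately from Theorem \ref{thrm:equi} once one notes that $X/S^1 \cong X$ for a trivial action. Your explicit verification that the quotient map is a homeomorphism and the appeal to functoriality of $\mathbb{R}\textbf{Loc}(-,G)$ simply spell out that immediate deduction.
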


\begin{example}
To conclude, we compute the $S^1$-equivariant derived stack on an a non-trivial example.  Consider the $S^1$-space $S^3_\text{Hopf}$ to be the 3-sphere along with the action of the Hopf map (i.e., scalar multiplication).
The orbit space $S^3_\text{Hopf}/S^1$ is homotopic to $S^2$. Therefore by Theorem \ref{thrm:equi} we get:
$$\mathbb{R}\textbf{Loc}^\mathfrak{C}(S^3_\text{Hopf},G) \simeq \mathbb{R}\textbf{Loc}(S^2,G) \simeq [\textbf{Spec} \text{ Sym}_k( \mathfrak{g}^\ast[1])/G]$$
where the final equivalence is computed in the literature, for example, \cite[p. 200]{MR3285853}.
\end{example}

\bibliographystyle{siam}
\bibliography{phdbib}

\end{document}